\title{ Randomized Kaczmarz in Adversarial Distributed  Setting \thanks{ 
This work is partially supported by AMS Simons Travel grant, NSF  DMS \#2011140 and NSF DMS \#2108479. }}
\author{Longxiu Huang\thanks{Department of Computational Mathematics, Science and Engineering and Department of Mathematics, Michigan State University, MI (\email{huangl3@msu.edu}).}
\and Xia Li\thanks{ Microsoft Corporation, WA (Corresponding author: \email{xli51@g.ucla.edu} or \email{xiali1@microsoft.com}).}
\and Deanna Needell\thanks{Department of  Mathematics, University of California Los Angeles, CA (\email{deanna@math.ucla.edu}).}
}
\crefname{hypothesis}{Hypothesis}{Hypotheses}
\begin{document}

\maketitle

\begin{abstract}
Developing large-scale distributed methods that are robust to the presence of adversarial or corrupted workers is an important part of making such methods practical for real-world problems. In this paper, we propose an iterative approach that is adversary-tolerant for convex optimization problems. By leveraging simple statistics, our method ensures convergence and is capable of adapting to adversarial distributions. Additionally, the efficiency of the proposed methods for solving convex problems is shown in simulations  with the presence of adversaries. Through simulations, we demonstrate the efficiency of our approach in the presence of adversaries and its ability to identify adversarial workers with high accuracy and tolerate varying levels of adversary rates.
\end{abstract}

\begin{keywords}
Randomized Kaczmarz, Adversarial Optimization,  Distributed Computing, Mode Detection.
\end{keywords}

\begin{MSCcodes}
65F20, 65F10, 65K10
\end{MSCcodes}

\section{Introduction}
As machine-learning algorithms gain popularity in industrial applications, it is critical to make them and their optimization subroutines robust and adversary-tolerant. These attacks can take various forms, including evasion \cite{evation}, data poisoning \cite{data_poison} and model extraction \cite{model_privacy, sesame}. In large-scale machine learning problems, which are often run on distributed systems, attacks can come in the form of Byzantine attacks \cite{byz_General}, where individual computing units, also known as `worker machines' or simply `workers', may produce arbitrarily or maliciously results. Real world applications include protecting a circular system against failure of
individual chips, or a ballistic missile defense system using redundant computing sites to protect against the destruction of individual sites by a nuclear attack \cite{lamport2019byzantine}. Especially, in the setting of linear systems, one application is monitoring and control systems for critical infrastructure such as power grids \cite{SWIRYDOWICZ2022102870} and water supply networks \cite{doi:10.1080/00207543.2013.850550}. Here adversarial agents may attempt to disrupt these systems by injecting erroneous data or compromising sensor readings.  

A commonly used approach to mitigate Byzantine attacks is to use \textit{redundancy} \cite{lamport2019byzantine}; that is, to request the same computation from multiple workers.
The main challenge with such an approach is how to leverage the outputs from these workers  efficiently, and in such a way that even seemingly catastrophic adversarial outputs can be identified and tolerated. 
Let's consider the optimization problem of the following form:
\begin{equation}\label{eqn:general_obj}
\min_{x \in \mathbb{R}^{d_2}}F(x)=\sum_{i=1}^{d_1}f_i(x)
\end{equation} 
where $d_1$ is a positive integer. To solve the problem iteratively, we use gradient descent method to update the estimate:   
\begin{equation}\label{eqn:general_update}
x_{j+1} = x_{j} + \gamma_{j}\sum_{i=1}^{d_1}\nabla f_i(x_j) 
\end{equation}
with some step-size $ \gamma_{j}$. Such objective functions lend themselves naturally to distributed algorithms. In the distributed  setting, the central worker distributes $f_i$ among the workers. Each worker returns the corresponding gradient $\nabla f_i(x_j)$ and the central worker aggregates those returns  to compute or approximate the updating step \eqref{eqn:general_update}.  In particular, we illustrate our method on solving an over-determined linear system $Ax=b$. However, the algorithms can be easily adapted for \eqref{eqn:general_obj}. The linear system can be modeled as a least squares problem $\min_{x}\|Ax-b\|_2^2$ and the least squares problem can be rewritten in the form of \eqref{eqn:general_obj} with $f_i(x_j) = \frac{1}{2}(A_ix_j - b_i)^2$, where $A \in \mathbb{R}^{d_1\times d_2}, b\in \mathbb{R}^{d_1} $, $A_i$ is the $i$-th row of $A$, and $b_i$ is the $i$-th component of $b$. 
The central worker partitions the data matrix $A$ into rows $A_i$ and the rows are distributed among the workers. In the linear setting, each worker only needs to return the scalar  $A_ix_j - b_i$ instead of  the gradient $(A_ix_j - b_i)A_i^\top$. Then the central worker aggregates those returns and approximate the updates in  \eqref{eqn:general_update}.

In this work, we consider the setting where some of the workers are adversarial, i.e., the workers return noisy results or enormously large results. Our goal is to develop a variant of the randomized Kaczmarz (RK) method \cite{strohmer2009randomized} for adversarial workers to solve the linear system $Ax=b$. For readers' convenience, we restate the RK method in Alg.~\ref{alg:kac_alg1}.
\begin{algorithm}[!h]
\caption{Randomized Kaczmarz Algorithm}\label{alg:kac_alg1}
\begin{algorithmic}[1]
\STATE Select a row index $i_j \in [d_1]$ with probability $\frac{\|A_{i_j}\|_2^2}{\|A\|_F^2}$
    \STATE Update $x_{j+1} = \arg\min_{x\in \mathbb{R}^{d_2}}\|x-x_j\|$  s.t. $A_{i_j}x_{j+1} = b_{i_j}$
    \STATE Repeat until convergence
    \end{algorithmic}
\end{algorithm}
We assume that there is one central worker $w_c$ and $N$ workers in total, among which $p$ fraction of the unknown workers are adversarial and there are $k$ error categories in total. During the initial data distribution, each row $A_r$ is distributed to $N_r$ workers. Among those $N_r$ workers, workers in the $\ell$-th category $C_{\ell}$ consist of $p_{r,\ell}$ fraction of all workers. We assume $C_0$ contains all reliable workers. The total adversarial rate for row $r$ is $p_r = \sum_{\ell=1}^{k}p_{r,\ell}$ and the fraction of reliable workers for row $r$ is $p_{r,0}$. We assume $p_{r,\ell} < 1- p_r = p_{r,0}$, for all $r$, and $\ell\neq 0$. {In particular, we assume that an adversarial worker $w_s^r$ in category $C_{\ell}$ returns the residual $c_{s}^r=b_r+e_{\ell,r}-\langle x_{j},A_r^T \rangle$, $e_{\ell,r} \in \mathbb{R}$, and a reliable worker returns $c_{s}^r=b_r -\langle x_{j},A_r^T \rangle$.} 
Our approach utilizes simple statistics to identify and ignore adversarial results, and thus the setting in which the adversarial workers communicate and select among 
$k$ types of errors to output is the most challenging for our approach. 

\subsection{Contribution}
Our key contributions are threefold:
(i) develop efficient methods and algorithms to guarantee accurate estimates for the true solution in the presence of adversarial workers; (ii) identify the adversarial workers efficiently; (iii) provide theoretical convergence analysis for solving linear systems with a portion of workers being adversarial.
\subsection{Related work}
\noindent\textbf{Kaczmarz method.~}
The Kaczmarz method is an iterative technique for solving linear systems that was first introduced in   1937   by Kaczmarz \cite{karczmarz1937angenaherte}. In computer tomography, the method is also referred to as the Algebraic Reconstruction Technique (ART) \cite{gordon1975image,herman1993algebraic,natterer2001mathematics}. The method has a broad range of applications, from computer tomography to digital signal processing. Later Strohmer et al.  proposed a randomized version of the Kaczmarz method (RK) \cite{strohmer2009randomized} in the context of consistent linear systems.   They proved that RK has an exponential bound on the expected rate of convergence, with the probability of selecting each row proportional to the squared Euclidean norm of that row.  The method has also been adapted to handle inconsistent linear systems \cite{petra2016single,popa1999characterization,bai2021,ma2015convergence}. For example, Needell proved in \cite{Needell2009RandomizedKS} that RK converges for inconsistent linear systems to a horizon that depends on the size of the largest entry of the noise. An adaptive maximum-residual sampling strategy has also been analyzed for the inconsistent extension \cite{petra2016single}. Additionally, RK has been studied in the context of solving systems of linear inequalities \cite{leventhal2010randomized,agmon_1954_mm,bai2019partially} as well as systems of tensor equations \cite{wang2023solving}.


\noindent\textbf{Robust optimization.~}
In optimization problems, practical challenges often arise due to various factors such as errors in data collection and transmission, adversarial or non-responsive workers (also known as stragglers), and corruptions in modern storage systems. In the setting of games, adversarial workers are agents that can directly influence the decision-making rules of a subset of agents and change the dynamics of the games which can eventually degrade the performance of the system \cite{borowski2015understanding}. Similarly, in the network setting, agents who may wish to collaborate as adversaries can influence the equilibrium or consensus \cite{vyavahare2019distributed}. In machine learning, adversarial workers send gradients that include carefully crafted malicious patterns which are designed to introduce biases or vulnerabilities into the global model when aggregated (model poisoning attack). Other adversaries include gradient flipping \cite{6868233} (adversarial nodes flip the sign of gradients or  modify them to push the global model in the wrong direction) and backdoor attack \cite{gao2020backdoor} (adversarial workers inject backdoor patterns into their local gradients).

To address these challenges, researchers have proposed various mitigation strategies.  
For instance, to tackle the issue of straggling workers, several encoding schemes have been proposed in literature. For example, Gordon et al. \cite{GORDON1970471} and Karakus et al. \cite{karakus2017straggler} introduced methods to embed redundancy directly in the data, while Bitar et al. \cite{bitar2020stochastic} proposed a gradient-coding scheme for straggler mitigation when stragglers are uniformly random. 

Another important branch in the analysis of SGD-type methods is to deal with robustness to adversaries from the data. Chi et al. \cite{Chi2019MedianTruncatedGD} and Haddock et al. \cite{haddock2020quantile} designed quantile-based methods to solve corrupted linear equations. Yang et al. \cite{yang2019byrdie} proposed a variant of the gradient descent method based on the geometric median to deal with adversarial workers, while Alistarh et al. \cite{alistarh2018byzantine} discussed the problem of stochastic optimization in an adversarial setting where the workers sample data from a distribution and an $\alpha$ fraction of them may adversarially return any vector. However, these methods are limited to scenarios where the adversary rate is less than $\frac{1}{2}$. Our proposed algorithm, on the other hand, can converge to the exact solution even with an adversary rate higher than $\frac{1}{2}$ by utilizing redundancy.

\section{Method}
In this section, we present a simple and efficient mode-based method for solving linear systems in the presence of adversarial workers, as well as identifying potential adversarial workers which may be placed in a block-list (more details will be provided later). The method detects the mode category based on the size of the returned result groups. More specifically, for each row, the central worker groups similar results and selects the result from the group with the largest size, referred to as the {\bf mode}. From these modes across all selected rows, the central worker then updates the guess with the mode with the largest size. If there is only one row, the central worker updates the guess with the mode.

Given the number of used workers $n_r$ for a specific row $r$, the expected number of workers from category $C_{\ell}$ is ${n_rp_{r,\ell}}$ \footnote{The central worker determines the number of different result groups during the first $m$ iterations and takes the maximum number} and the number of non-adversarial workers is $n_r(1-p_r)$ with $p_r=\sum_{\ell=1}^{k}p_{r,\ell}$. In practice, a group with the maximum size is randomly selected and used to update the guess as long as its size is greater than $n_r(1-p_r)$ (as shown in Alg.~\ref{alg:multi_block_list} Line~\ref{alg:line:9}). If the algorithm is implemented with a block-list, the block-list is updated through a frequency-based approach throughout the iterations: each row has a counter that records whether a worker is selected but fails to be the mode during each iteration. For every updating cycle $S$, the worker with the largest count in each counter is identified as a potential adversarial worker and placed in the block-list (as shown in  Alg.~\ref{alg:multi_block_list} ~Line~\ref{alg:line:13} -- \ref{alg:line:15}). Once a worker is in the block-list, it will not be considered in future iterations. The full details of the algorithm can be found in Alg.~\ref{alg:multi_block_list} and the related theoretical results are provided in the following section.

\begin{algorithm}[!th]
\caption{{Distributed randomized Kaczmarz with/without block-list}}\label{alg:multi_block_list}
\begin{algorithmic}[1]
    \STATE \textbf{Input}: Worker sets $D_r$, the number of used worker for each row $n_r$ the number of used rows $d_0$, $\text{MaxIter}$, $\text{Tol}$, updating cycle $S$, Blocklist\_flag
    \STATE Initialize $c_s = 2 \cdot\text{Tol}$, a counter vector $E_r = 0\in \mathbb{R}^{N_r}$ for each row $r$
    \IF{Blocklist\_flag}
       \STATE Initialize \textbf{block-list $B$}  
    \ENDIF
    \WHILE{$j < \text{MaxIter}$ and $|c_s| > \text{Tol}$, }
    \STATE The central worker $w_c$ selects rows $\tau$  from $[d_1]$ uniformly  at random
    \STATE Sample $w_1^r,\ldots,w_{n_r}^r$ for each row $r \in \tau$, uniformly at random from $D_r$
    \STATE Broadcast $A_{r}$ to $w_1^r,\ldots,w_{n_r}^r$ 
    \STATE  
    $w_s^r$ returns $c_s^r = \frac{\langle A_{r},x_i\rangle-(b_{r}+ e_{\ell,r})}{\|A_{r}\|^2} $, if  $w_s\in C_{\ell}$ 
    \FOR{$r \in \tau$
    }
    \STATE $w_c$ splits  $\{c_s^r\}_{s = 1}^{n_r}$ into groups $G_1,\ldots,G_{k_r}$
    \STATE {Set $G_{s^*}^r=G_{s^*}$}, where $s^*=\arg\max\limits_{s\in S}|G_{s}|$ and $S=\{s:|G_{s}| \geq n_r(1-\sum_{l=1}^{k_r}p_{r,l})\}$\label{alg:line:9}
     \STATE Update $E_r(s) = E_r(s) + 1,$ if $c_s^r\notin G_{s^*}^r$\footnotemark\label{alg:line:13}
    \ENDFOR
    \STATE $c_{s^*} = \max_{r\in \tau} |c_{s^*}^{r}|$ {and $i_j = \arg\max_{r\in \tau} |c_{s^*}^{r}|$,} where $c_{s^*}^{r} \in G_{s^*}^{r}$

   \STATE Update $x^{j+1} = x^{j} + c_{s^*}A_{i_j}^\top$
  
   \IF {Blocklist\_flag \& mod$(j,S) = 0$}\label{alg:line:10}
   \FOR{$r \in \tau$
    }
        \STATE Update $B$ by checking the value of entries in $E_r$\label{alg:line:15}
        \STATE $D_r = D_r\setminus B$
    \ENDFOR
   \ENDIF
   \STATE Update $j=j+1$
    \ENDWHILE
    \IF{Blocklist\_flag}
        \STATE \textbf{Output}: $x^j$ and $B$
    \ELSE
    \STATE \textbf{Output}: $x^j$
    \ENDIF
    \end{algorithmic}
\end{algorithm}
\footnotetext{Note for a $n_r$-dimensional vector $E_r$, we adopt $E_r(i)$ to denote the $i$-th entry of the vector.}

\section{Theoretical results}
In this section,  we   provide a rigorous theoretical analysis of the mode distributions and convergence behavior of our mode-based method. To simplify the presentation, 
we  provided a summary of the key notation used in our analysis in Table~\ref{tab:notation}.
         

\begin{table}[h]
\caption{Notation Table}
    \label{tab:notation}
    \centering
    \begin{tabular}{|l|l|}
      \hline
        $A$ & Data matrix $A$, $A \in \mathbb{R}^{{d_1}\times{d_2}}$ \\
         \hline
        $\Tilde{A}$ & Row normalized version of matrix $A$\\
        \hline
        $N$ & Number of workers in total \\
        \hline
        $N_r$ & Number of workers holding row $r$ \\
        \hline
        $n_r$ & Number of workers chosen for row $r$ \\
        \hline
         $C_{\ell}$ & $\ell$-th error category\\
        \hline
        $k$ & Number of error categories in total\\
        \hline
        $e_{r,\ell}$ & Error of the $\ell$-th error category for a row $r$\\
        \hline
        $e_{r}$ & \makecell[l]{Vector form of errors in all error categories of row $r$, \\
        $e_{r} \coloneqq (e_{r,1},\ldots,e_{r,k})^{\top}$}\\
        \hline
        $e$ & \makecell[l]{Matrix form of errors in all error categories of all rows,\\ $e\coloneqq(e_{r,\ell})_{r,\ell}$}\\
        \hline
        $d_0$ & Number of chosen rows  per iteration \\
        \hline
        $p_{r,\ell}$ & Fraction of workers holding row $r$ in category $\ell$\\ 
        \hline
        $\hat{q}_{\text{mode}}^{\ell,r}$ & \makecell[l]{Probability that there is a mode among the outputs\\
        of chosen workers of row $r$ and the mode is in the category $\ell$ \\(see Lemma~\ref{lem: q})}\\
         \hline
         $q^r$ & \makecell[l]{Probability that there is a mode among the outputs \\
         of chosen workers for row $r$ (see Lemma~\ref{lem: q})}\\
         \hline
         $[d_1]$ & Set of the integers from 1 to $d_1$, $[d_1]\coloneqq\{1,\ldots,d_1\}$\\
         \hline
         $\binom{[d_1]}{d_0}$ &  Collection of the subsets of $[d_1]$ with $d_0$ elements \\
         \hline
         $\text{unif}\left(\binom{[d_1]}{d_0}\right)$ &   Uniform sampling from the collection $\binom{[d_1]}{d_0}$.\\
         \hline
         $\tau_i$ & Index set of chosen rows at $i$-th iteration, $|\tau_i| = d_0$\\
         \hline
         $\tau_i'$ & Index set of chosen rows that have a mode, $\tau_i' \subset \tau_i$\\
         \hline
         $t_i$ &   Row  index that has the largest mode number, $t_i = t(x_{i-1},\tau_i)$\\
         \hline
         $\mathbb{P}(r \text{ mode}, g, \ell) $ & \makecell[l]{Probability that the mode is in the category $\ell$\\
         with mode number $g$ for row $r$ (see Lemma~\ref{lem:mode g})}\\
         \hline
         $\mathbb{P}(t_i,\ell,g|\tau_i,x_{i-1})$ & \makecell[l]{Probability that a mode 
         is  from  row $t_i$ among rows $\tau_i$  
         and \\the mode is in  category $C_{\ell}$ with  mode number $g$,    given \\
     the previous estimate $x_{i-1}$. It is also denoted by $\mathbb{P}(t_i,\ell,g)$,}\\
         \hline
         $\mathbb{P}(t_i,g)$ & \makecell[l]{Probability that the mode is from row $t_i$ 
         among rows $\tau_i$ \\ with mode number $g$ 
         provided the previous  estimate $x_{i-1}$,\\ more details refer to Corollary~\ref{cor:pro_rowt_i}.
         } \\
         \hline
    \end{tabular}
\end{table}
 
\subsection{Mode distribution}
Algorithm~\ref{alg:multi_block_list} utilizes the mode to identify adversarial workers and achieve convergence. In this section, we discuss the calculation of the probability of a specific category $\ell$ being the mode of a given row $r$ during each iteration of the algorithm.   For simplicity, let $C_0$ denote the category of ``reliable" workers (workers return correct results). For each row $r$, the fraction of reliable workers holding row $r$ is $p_{r,0}=1-p_r$. 
 We use $d_0$ rows for the computation per iteration. Recall that each row $r$ is held by $N_r$ workers (fixed). Among those $N_r$ workers, workers in the category $\ell$ take up a fraction of $p_{r,\ell}$. At each iteration, the central worker chooses a set of row indices of size $d_0$ uniformly at random  and requests the corresponding workers to return their results. More specifically, given a set of row indices $\tau_i$ at $i$-th iteration, the central worker first finds the modes among the results from each row $r \in \tau_i$ and among those modes, chooses the mode with the largest group size (``the majority vote").  
 
For any row $r$, let $a^r_{g,\ell}$ be the coefficient of the monomial  $x^{n_r-g}$ of the polynomial  
\begin{eqnarray*}
\prod\limits_{{\ell'}=0,{\ell'}\neq \ell}^{k}\left(\sum_{j=0}^{g-1}\binom{N_rp_{r,{\ell'}}}{j}x^j\right).
\end{eqnarray*} 
Let $b_{g}^{r}$ be the coefficient of the term $x^{n_r}$ of the polynomial
\[
\prod_{\ell=0}^k\left( \sum_{j=0}^{g-1}\binom{N_rp_{r,\ell}}{j}x^j\right)
\]
\begin{lemma}\label{lem:mode g}
For   row $r$, the probability that the mode is in the category $\ell$ with mode number $g$ is
$
\mathbb{P}(r \text{ mode}, g, \ell) = \frac{\binom{N_r p_{r,\ell}}{g}a^r_{g,\ell}}{\binom{N_r}{n_r}}$.
\end{lemma}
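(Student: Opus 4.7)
The plan is to enumerate the favorable outcomes directly and then divide by the total number of worker selections. Since the central worker draws $n_r$ of the $N_r$ workers that hold row $r$ uniformly at random, and since category membership of each worker is fixed (category $C_{\ell'}$ contains $N_r p_{r,\ell'}$ of these workers), the denominator of the probability is $\binom{N_r}{n_r}$.

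Next I would translate the event \emph{``category $\ell$ is the mode with mode number $g$''} into an explicit combinatorial description: the chosen set must contain exactly $g$ workers from $C_\ell$ and, for every other category $\ell' \neq \ell$, at most $g-1$ workers (so that $\ell$ is strictly the largest group, matching the upper summation index $g-1$ appearing in the definition of $a^r_{g,\ell}$). The number of ways to pick the $g$ category-$\ell$ workers is $\binom{N_r p_{r,\ell}}{g}$. For the remaining $n_r - g$ workers, I would partition them as $(j_{\ell'})_{\ell' \neq \ell}$ with $0 \le j_{\ell'} \le g-1$ and $\sum_{\ell' \neq \ell} j_{\ell'} = n_r - g$; each admissible tuple contributes $\prod_{\ell' \neq \ell} \binom{N_r p_{r,\ell'}}{j_{\ell'}}$ selections.

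The key observation is that summing $\prod_{\ell' \neq \ell} \binom{N_r p_{r,\ell'}}{j_{\ell'}}$ over all admissible tuples is exactly coefficient extraction in a product of truncated generating polynomials. Indeed, for each $\ell' \neq \ell$, encoding ``pick $j$ workers from category $\ell'$'' by the monomial $\binom{N_r p_{r,\ell'}}{j} x^j$ for $j \in \{0,\ldots,g-1\}$ and forming the product
\[
\prod_{\ell' = 0,\, \ell' \neq \ell}^{k} \left(\sum_{j=0}^{g-1}\binom{N_r p_{r,\ell'}}{j} x^{j}\right),
\]
the coefficient of $x^{n_r-g}$ is precisely $a^r_{g,\ell}$. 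Multiplying by $\binom{N_r p_{r,\ell}}{g}$ counts the favorable outcomes, and dividing by $\binom{N_r}{n_r}$ gives the stated probability.

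I do not anticipate a substantive obstacle; this is a bookkeeping argument. The only subtlety worth flagging in the write-up is the tie-breaking convention: the bound $j_{\ell'} \le g-1$ (strict inequality) corresponds to demanding that category $\ell$ be \emph{strictly} the largest group, which is what the polynomial definition of $a^r_{g,\ell}$ encodes. I would state this explicitly before launching into the generating-function identification so that the reader can see that the combinatorial event and the polynomial coefficient describe the same set of admissible selections.
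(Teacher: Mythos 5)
Your proposal is correct and follows essentially the same argument as the paper's proof in Appendix~\ref{pf:lem_mode_g}: count the $\binom{N_r p_{r,\ell}}{g}$ ways to pick the $g$ mode workers from $C_\ell$, count the admissible fillings of the remaining $n_r-g$ slots by extracting the coefficient of $x^{n_r-g}$ from the product of truncated generating polynomials (which is $a^r_{g,\ell}$), and divide by $\binom{N_r}{n_r}$. Your explicit remark that the truncation at $j\le g-1$ encodes strict majority of category $\ell$ is a welcome clarification that the paper's prose only states implicitly.
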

\begin{proof}
    See Appendix \ref{pf:lem_mode_g}.
\end{proof}
Using Lemma~\ref{lem:mode g}, we obtain the following conclusions by going over all possible mode numbers and  all error categories.
\begin{lemma}\label{lem: q}
For row $r$, the probability that the category $\ell$ is the mode is 
\[
 \hat{q}_{mode}^{{\ell},r} 
 =\sum_{g = g_0(r)}^{n_r}\frac{\binom{N_r p_{r,\ell}}{g}a_{g,\ell}^r}{\binom{N_r}{n_r}},
\]
where 
$g_0(r) = \max(\lceil\frac{n_r}{k+1}\rceil,  \lceil n_rp_{r,0}\rceil)$, and the probability that  there is a mode with mode number $g$ for the calculation of row $r$ 
is
\[
 q_{g}^{r} 
 =\sum_{\ell=0}^{k}\frac{\binom{N_r p_{r,\ell}}{g}a_{g,\ell}^r}{\binom{N_r}{n_r}}.
\] 
Additionally,  the probability that there is a mode for the calculation of row $r$ is
\begin{equation}
q^r = \sum_{\ell = 0}^{k}\hat{q}_{mode}^{{\ell},r} 
=\sum_{g =g_0(r)}^{n_r}\sum_{\ell=0}^{k}\frac{\binom{N_r p_{r,\ell}}{g}a_{g,\ell}^r}{\binom{N_r}{n_r}},
\end{equation}
where $\binom{n}{g} = 0$ if $n < g$, for any integer $n$. 
\end{lemma}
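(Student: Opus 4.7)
The plan is to derive all three identities as immediate consequences of Lemma~\ref{lem:mode g} by summing its formula over the appropriate disjoint events; the only real content is justifying the summation ranges (in particular the lower bound $g_0(r)$) and the disjointness, so the argument should be short.

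First I would fix a row $r$ and a category $\ell$, and observe that the events
\[
E_{g,\ell} = \{\text{there is a mode for row } r \text{ with mode number } g \text{ lying in category } C_{\ell}\}
\]
are pairwise disjoint across different values of $g$, because the mode number is an integer-valued function of the outputs that is defined whenever a mode exists. Consequently
\[
\hat{q}_{mode}^{\ell,r} = \sum_{g} \mathbb{P}(E_{g,\ell}) = \sum_{g} \frac{\binom{N_r p_{r,\ell}}{g} a_{g,\ell}^r}{\binom{N_r}{n_r}},
\]
with the sum taken over all admissible $g$. The next step is to identify this admissible range: the upper bound $g \leq n_r$ is trivial since only $n_r$ workers are chosen, and the lower bound $g \geq g_0(r)$ comes from two independent constraints. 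On one hand, the mode corresponds to the largest of at most $k+1$ category-groups whose sizes sum to $n_r$; by pigeonhole the largest size is at least $\lceil n_r/(k+1)\rceil$. On the other hand, Line~\ref{alg:line:9} of Algorithm~\ref{alg:multi_block_list} rejects any candidate group whose size is below $n_r(1-p_r) = n_r p_{r,0}$, so the mode (when it exists) must satisfy $g \geq \lceil n_r p_{r,0}\rceil$. Taking the maximum of the two thresholds gives $g_0(r)$, and for $g < g_0(r)$ the corresponding event has probability zero so it contributes nothing to the sum.

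For the second identity I would fix $g$ instead of $\ell$ and use that the events $E_{g,\ell}$ are also disjoint across $\ell$, since the category of the (unique-size) mode is determined once it exists; summing Lemma~\ref{lem:mode g} over $\ell = 0, 1, \dots, k$ gives $q_g^r$. The third identity $q^r = \sum_{\ell=0}^k \hat{q}_{mode}^{\ell,r}$ is then just the double sum over both $g$ and $\ell$ of $\mathbb{P}(E_{g,\ell})$, which equals the probability that at least one $E_{g,\ell}$ occurs, i.e., that a mode exists for row $r$.

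The main subtlety I expect is the boundary of the range for $g$: Lemma~\ref{lem:mode g} (through the factor $\sum_{j=0}^{g-1}\binom{N_r p_{r,\ell'}}{j} x^j$) implicitly treats the mode as being \emph{strictly} larger than every other category-group, whereas the algorithm's description allows ties broken at random. I would want to either (a) argue that $g_0(r)$ is large enough that ties cannot occur in the events we are summing (so the strict-majority formula is exact), or (b) note that any tied configurations contribute zero to the probability under the stated polynomial formula and are absorbed into the convention $\binom{n}{g}=0$ for $n<g$. Once this is reconciled, the three identities follow immediately by pure summation, with no further combinatorial computation required.
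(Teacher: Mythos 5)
Your proposal is correct and follows essentially the same route as the paper, which gives no separate proof beyond the remark that the lemma follows from Lemma~\ref{lem:mode g} ``by going over all possible mode numbers and all error categories,'' i.e.\ exactly the disjoint-event summations over $g$ and $\ell$ that you carry out. Your additional justification of the range $g_0(r)\le g\le n_r$ and of the strict-majority/tie convention is more detail than the paper supplies, and it is consistent with the polynomial factors $\sum_{j=0}^{g-1}\binom{N_rp_{r,\ell'}}{j}x^j$ used in Lemma~\ref{lem:mode g}.
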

 
In the following lemma, we also calculate the probability $\mathbb{P}(t_i, \ell, g|\tau_i, x_{i-1})$ that a mode is from row $t_i$ in the category $C_{\ell}$ with a mode number $g$ when rows $\tau_{i}$ are used in the computation and the previous estimate $x_{i-1}$ is given. For simplicity, we omit the condition of $\tau_i,x_{i-1}$ in the notation and denote $\mathbb{P}(t,\ell,g|\tau_i,x_{i-1})$ by $\mathbb{P}(t_i,\ell,g)$.

\begin{lemma}
Given  the previous estimate $x_{i-1}$ and  row indices $\tau_i$, we have
\[
\mathbb{P}(t_i,\ell,g) = \frac{\binom{N_{t_i}p_{t_i,\ell}}{g}a^{t_i}_{g,\ell}}{\binom{N_{t_i}}{n_{t_i}}}\prod_{s\in\tau_i\setminus{t_i}}\frac{b^s_g}{\binom{N_s}{n_s}}.\]
\end{lemma}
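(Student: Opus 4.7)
The plan is to decompose the joint event into independent row-wise events, since the sampling of $n_r$ workers within each row $r \in \tau_i$ is performed independently. Conditioning on $\tau_i$ and $x_{i-1}$ only fixes which rows are in play and the common vector to which residuals refer; within each row the only randomness is the uniform draw of $n_r$ workers from the $N_r$ holders. Thus I would write
\begin{equation*}
\mathbb{P}(t_i,\ell,g) \;=\; \mathbb{P}\bigl(\text{row $t_i$ has mode in $C_\ell$ with size $g$}\bigr)\cdot\!\!\!\prod_{s\in\tau_i\setminus\{t_i\}}\!\!\!\mathbb{P}\bigl(\text{row $s$ has every category count $<g$}\bigr),
\end{equation*}
so that the event ``the mode across $\tau_i$ comes from row $t_i$ with size $g$ in category $C_\ell$'' is precisely: $t_i$ achieves count $g$ in $C_\ell$ (which is its own mode by Lemma~\ref{lem:mode g}), and no other row reaches count $g$ in any category.

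For the factor at $t_i$, I would invoke Lemma~\ref{lem:mode g} directly to obtain $\binom{N_{t_i}p_{t_i,\ell}}{g}a^{t_i}_{g,\ell}/\binom{N_{t_i}}{n_{t_i}}$. For each $s\in\tau_i\setminus\{t_i\}$, I would give a combinatorial interpretation of $b^s_g$. Expanding
$\prod_{\ell=0}^{k}\bigl(\sum_{j=0}^{g-1}\binom{N_sp_{s,\ell}}{j}x^j\bigr)$
and extracting the coefficient of $x^{n_s}$ sums $\prod_{\ell=0}^{k}\binom{N_sp_{s,\ell}}{j_\ell}$ over all nonnegative tuples $(j_0,\dots,j_k)$ with $j_\ell\le g-1$ and $\sum_\ell j_\ell=n_s$. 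By the multi-hypergeometric identity this counts precisely the number of ways to draw $n_s$ workers from the $N_s$ holders of row $s$ subject to selecting $j_\ell<g$ workers from each category $C_\ell$. Dividing by $\binom{N_s}{n_s}$ yields the desired row-$s$ probability $b^s_g/\binom{N_s}{n_s}$.

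Multiplying the $t_i$-factor with the independent factors over $s\in\tau_i\setminus\{t_i\}$ gives the formula in the lemma. The step most at risk of being sloppy is making precise what ``the mode is from row $t_i$ with size $g$'' means in the presence of ties: specifically, I would note that the formula counts the event where $t_i$ strictly dominates (every other row has max count $<g$), which is consistent with the algorithm's definition $t_i=\arg\max_{r\in\tau_i}|G^r_{s^*}|$ once ties among rows are broken in favor of $t_i$ (or have probability zero in the relevant downstream arguments). Once that convention is fixed, the remaining work is a routine product of hypergeometric probabilities, so the main obstacle is really the bookkeeping of the combinatorial identification of $b^s_g$ with the count of sub-threshold samples.
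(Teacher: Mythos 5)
Your proposal is correct and follows essentially the same route as the paper: factor the event into the row-$t_i$ mode probability from Lemma~\ref{lem:mode g} times, independently for each $s\in\tau_i\setminus\{t_i\}$, the probability that all category counts in row $s$ stay below $g$, identified with $b^s_g/\binom{N_s}{n_s}$. Your explicit combinatorial reading of $b^s_g$ and the remark on tie-breaking merely spell out details the paper's proof leaves implicit.
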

\begin{proof}
The probability that the mode  is produced from category $\ell$ of  row $t_i$ with  the mode number   $g$ can be expressed as
\begin{align*}
  \mathbb{P}(t_i,\ell,g)
    = & \mathbb{P}(t_i \text{ mode}, g, \ell)\times\mathbb{P}(\text{ the largest mode number from }t_i|t_i\text{ mode},\ell,g)\\
    = & \frac{\binom{N_{t_i}p_{t_i,\ell}}{g}a^{t_i}_{g,\ell}}{\binom{N_{t_i}}{n_{t_i}}}\times\mathbb{P}(\text{ the largest mode number from } t_i|t_i\text{ mode},\ell,g)\\
    = & \frac{\binom{N_{t_i}p_{t_i,\ell}}{g}a^{t_i}_{g,\ell}}{\binom{N_{t_i}}{n_{t_i}}}\prod_{s\in\tau_i\setminus{t_i}}\frac{b^s_g}{\binom{N_s}{n_s}}.   
\end{align*}

\end{proof}
Taking the modes produced from different categories into account, we can easily obtain the following result.
\begin{corollary}\label{cor:pro_rowt_i}
Given the previous estimate $x_{i-1}$ and the row indices $\tau_i$, 
the probability that a mode is from row $t_i$ with mode number $g$ 
is 
\begin{equation}
    \begin{aligned}
        \mathbb{P}(t_i,g)
        =& \sum_{\ell = 0}^k\mathbb{P}(t_i,\ell,g) 
        = \sum_{\ell = 0}^k \frac{\binom{N_{t}p_{t_i,\ell}}{g}a^{t_i}_{g,\ell}}{\binom{N_{t_i}}{n_{t_i}}}\prod_{s\in\tau_i\setminus\{t_i\}}\frac{b^s_g}{\binom{N_s}{n_s}}
        = q^{t_i}_g\prod_{s\in\tau_i\setminus{t_i}}\frac{b^s_g}{\binom{N_s}{n_s}}.
    \end{aligned}
\end{equation}
\end{corollary}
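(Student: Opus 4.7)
The plan is to reduce this corollary directly to the preceding lemma via the law of total probability. The event under consideration is ``among the rows in $\tau_i$, some row produces the largest mode and that row is $t_i$ with mode number $g$''. Partitioning this event by which category $C_\ell$ contains the selected mode yields a disjoint decomposition, since each worker belongs to exactly one category and (by the tie-breaking convention described after Algorithm~\ref{alg:multi_block_list}) only one group is chosen as the mode. Finite additivity then gives
\[
\mathbb{P}(t_i,g) \;=\; \sum_{\ell=0}^{k} \mathbb{P}(t_i,\ell,g).
\]

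Next, I would substitute the closed form from the preceding lemma, namely
\[
\mathbb{P}(t_i,\ell,g)
=\frac{\binom{N_{t_i}p_{t_i,\ell}}{g}\,a^{t_i}_{g,\ell}}{\binom{N_{t_i}}{n_{t_i}}}\prod_{s\in\tau_i\setminus\{t_i\}}\frac{b^{s}_{g}}{\binom{N_s}{n_s}},
\]
into the sum. The product $\prod_{s\in\tau_i\setminus\{t_i\}}\frac{b^{s}_{g}}{\binom{N_s}{n_s}}$ encodes the probability that every other chosen row $s\in\tau_i\setminus\{t_i\}$ either has no mode or produces a mode of size strictly less than $g$, and crucially this factor does not depend on $\ell$. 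Pulling it outside the sum over $\ell$ leaves a category sum depending only on row $t_i$.

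Finally, I would recognize the remaining quantity $\sum_{\ell=0}^{k}\frac{\binom{N_{t_i}p_{t_i,\ell}}{g}a^{t_i}_{g,\ell}}{\binom{N_{t_i}}{n_{t_i}}}$ as exactly $q_g^{t_i}$ by Lemma~\ref{lem: q}, which gives the compact expression $\mathbb{P}(t_i,g) = q_g^{t_i}\prod_{s\in\tau_i\setminus\{t_i\}}\frac{b^s_g}{\binom{N_s}{n_s}}$. The whole argument is essentially mechanical once Lemma~\ref{lem:mode g} and Lemma~\ref{lem: q} are in hand; the only point requiring real care is the disjointness across $\ell$, since a mode is a single selected group of identical returned values, so different $\ell$'s correspond to disjoint events on the outcome space of worker responses. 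That verification is the only conceptual obstacle, after which the identity follows by substitution and factoring.
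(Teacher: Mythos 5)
Your proposal matches the paper's argument: the paper likewise obtains the corollary by summing $\mathbb{P}(t_i,\ell,g)$ over the categories $\ell=0,\dots,k$, pulling out the $\ell$-independent factor $\prod_{s\in\tau_i\setminus\{t_i\}}\frac{b^s_g}{\binom{N_s}{n_s}}$, and identifying the remaining sum as $q^{t_i}_g$ from Lemma~\ref{lem: q}. Your extra remark on the disjointness of the events across $\ell$ is a correct justification of the step the paper treats as immediate, so the two proofs are essentially the same.
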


\subsection{Convergence without block-list}
 In this section,  our main goal is to provide theoretical error bound for the method without block-list (i.e., Alg.~\ref{alg:multi_block_list} without block-list). The main result for this section is present below. \begin{theorem}\label{thm:multiple-main}
Let $A\in\mathbb{R}^{d_1\times d_2}$ with $d_1\geq d_2$ and $b,e_1,\ldots,e_{k}\in\mathbb{R}^{d_1}$. Assume that we solve $Ax^*=b$ via Alg.~\ref{alg:multi_block_list} without block-list; then 
\begin{equation}
    \begin{aligned}
      \mathbb{E}\|x_{i}-x^*\|_2^2
      \leq &\alpha^{i}\|x_0-x^*\|_2^2+\frac{1-\alpha^{i+1}}{1-\alpha}\sum_{t\in[d_1]}\beta_t\|\widetilde{e}_t\|_2^2\, , 
    \end{aligned}
    \label{eqn:convg2}
\end{equation}
where 
\begin{equation}\label{eq:mul_main_para}
    \begin{aligned}
    &\alpha=1-Q_{\min}\frac{d_0}{d_1}\sigma_{\min}^2(\tilde{A}),\\
    &Q_{\min} =\min_{g, t_i,\tau_i}\sum_{g=g_0({t_i})}^{n_t}q^{t_i}_g\prod_{s\in\tau_i\setminus\{t_i\}}\frac{b^s_g}{\binom{N_s}{n_s}},\\
    &Q_{\max}(t,g,\tau_i)=\max_{\ell} \frac{\binom{N_tp_{t,\ell}}{g}a^t_{g,\ell}}{\binom{N_t}{n_t}}\prod_{s\in\tau_i}\frac{b^s_g}{\binom{N_s}{n_s}},\\
    &\|\widetilde{e}_t\|_2^2 = \sum_{\ell = 0}^k \Tilde{e}_{t,\ell}^2, ~\Tilde{e}_{t,\ell}^2 = \frac{e_{t,\ell}^2}{\|A_{t}\|_2^2}, \\
    &\beta_t = \sum_{\Tilde{\tau}_{t,i}\in\binom{[d_1-1]}{d_0-1}}\sum_{g=g_0({t})}^{n_{t_i}} \frac{1}{\binom{d_1}{d_0}}Q_{\max}(t,g,\tilde{\tau}_{t,i}),
    \end{aligned}
\end{equation}
and $\Tilde{A}$ is the row normalized matrix of  $A$ and $\sigma_{\min}(\tilde{A})$ is $\Tilde{A}$'s  smallest singular value.
\end{theorem}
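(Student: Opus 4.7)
The plan is to follow the template of a noisy randomized Kaczmarz analysis. First I would condition on the previous iterate $x_{i-1}$ and on the sampled row set $\tau_i$, and examine a single iteration. When the winning row is $t_i$ and its mode falls in category $C_\ell$, the update is a standard RK step applied to the perturbed right-hand side $b_{t_i} + e_{t_i,\ell}$. Expanding $\|x_i - x^*\|_2^2 = \|x_{i-1} - x^* - c_{s^*}A_{t_i}^\top\|_2^2$ and using that the hyperplane-projection residual is orthogonal to $A_{t_i}^\top$ gives, after a direct algebraic cancellation, the identity
\begin{equation*}
\|x_i - x^*\|_2^2 = \|x_{i-1} - x^*\|_2^2 - \langle \tilde A_{t_i}, x_{i-1} - x^*\rangle^2 + \tilde e_{t_i,\ell}^2 .
\end{equation*}
When no qualifying mode exists in any row of $\tau_i$, no update is performed and $\|x_i - x^*\|_2^2 = \|x_{i-1} - x^*\|_2^2$, which is absorbed into the bound without loss.

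Next I would take the conditional expectation over the random worker assignments. Directly from the lemma preceding Corollary~\ref{cor:pro_rowt_i}, the probability of the outcome $(t_i,\ell,g)$ is $\mathbb{P}(t_i,\ell,g) = \big[\binom{N_{t_i} p_{t_i,\ell}}{g} a^{t_i}_{g,\ell}/\binom{N_{t_i}}{n_{t_i}}\big]\prod_{s\in\tau_i\setminus t_i}b^s_g/\binom{N_s}{n_s}$. Substituting this into the one-step identity splits the bound into a decrease piece $\sum_{t_i\in\tau_i}\big(\sum_{\ell,g}\mathbb{P}(t_i,\ell,g)\big)\langle\tilde A_{t_i},x_{i-1}-x^*\rangle^2$ and a noise piece $\sum_{t_i\in\tau_i}\sum_{\ell,g}\mathbb{P}(t_i,\ell,g)\,\tilde e_{t_i,\ell}^2$. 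The inner sum of the decrease piece collapses via Corollary~\ref{cor:pro_rowt_i} to $\sum_{g=g_0(t_i)}^{n_{t_i}} q^{t_i}_g\prod_{s\in\tau_i\setminus t_i} b^s_g/\binom{N_s}{n_s}$, which is bounded below by $Q_{\min}$ uniformly in $(t_i,\tau_i)$ by definition. In the noise piece, replacing the $\ell$-dependent prefactor by its maximum over $\ell$ and using $\sum_\ell \tilde e_{t_i,\ell}^2 = \|\tilde e_{t_i}\|_2^2$ bounds it above by $\sum_{t_i\in\tau_i}\sum_g Q_{\max}(t_i,g,\tau_i\setminus t_i)\,\|\tilde e_{t_i}\|_2^2$.

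Averaging over $\tau_i\sim\text{unif}(\binom{[d_1]}{d_0})$ completes one iteration. By symmetry, $\mathbb{E}_{\tau_i}\sum_{t\in\tau_i}\langle\tilde A_t,v\rangle^2 = (d_0/d_1)\|\tilde A v\|_2^2 \geq (d_0/d_1)\sigma_{\min}^2(\tilde A)\|v\|_2^2$ with $v = x_{i-1}-x^*$, which produces the contraction factor $\alpha$. Re-indexing the noise sum by writing $\tilde\tau_{t,i} = \tau_i\setminus\{t\}$ running over $\binom{[d_1-1]}{d_0-1}$ with uniform weight $1/\binom{d_1}{d_0}$ reproduces exactly the coefficient $\beta_t$ from \eqref{eq:mul_main_para} multiplying $\|\tilde e_t\|_2^2$. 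Combining these two pieces yields the one-step recursion $\mathbb{E}\|x_i - x^*\|_2^2 \leq \alpha\,\mathbb{E}\|x_{i-1} - x^*\|_2^2 + \sum_{t\in[d_1]}\beta_t\|\tilde e_t\|_2^2$; unrolling $i$ times and summing the geometric series $\sum_{j=0}^{i}\alpha^j = (1-\alpha^{i+1})/(1-\alpha)$ delivers \eqref{eqn:convg2}.

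The main obstacle I anticipate is the combinatorial bookkeeping in the second step: matching $\mathbb{P}(t_i,\ell,g)$ to the $Q_{\min}$ and $Q_{\max}$ quantities requires a careful check that the factor $\prod_{s\in\tau_i\setminus t_i} b^s_g/\binom{N_s}{n_s}$ correctly encodes the event that every other row's largest group has size strictly less than $g$, so that the winner-selection rule of Alg.~\ref{alg:multi_block_list} is faithfully represented by these probabilities. A secondary subtlety is keeping the directions of the inequalities consistent---bounding the decrease piece from below so that $Q_{\min}$ appears, and the noise piece from above so that $Q_{\max}$ appears---which is routine but error-prone.
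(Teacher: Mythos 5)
Your proposal is correct and follows essentially the same route as the paper: the same one-step RK error identity, the same use of $\mathbb{P}(t_i,\ell,g)$ to split the conditional expectation into a contraction piece bounded below via $Q_{\min}$ and a noise piece bounded above via $Q_{\max}$, the same re-indexing over $\tilde\tau_{t,i}$ yielding $\beta_t$, and the same geometric-series unrolling (this is exactly the content of Lemma~\ref{lem:ax-b} and Lemma~\ref{lem:err}). Your explicit handling of the no-mode (no-update) iterations is a minor point the paper leaves implicit, but it does not change the argument.
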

Before we   prove Theorem~\ref{thm:multiple-main}, we let $t_i$  be  the row  selected at $i$-th iteration to update the guess $x$ and consider solving
\begin{equation}
    \begin{aligned}
        A_{t_i}x&=b_{t_i}+e_{t_i,0} \text{~with~}e_{t_i,0}=0 \\
        A_{t_i}x&=b_{t_i}+e_{t_i,1},\\
        ~&\vdots\nonumber\\
        A_{t_i}x&=b_{t_i}+e_{t_i,k},
    \end{aligned}
\end{equation}
according to some probability distribution.  
Thus, we    have the iteration 
$$
x_{i}=x_{i-1}-\frac{\langle A_{t_i},x_{i-1}\rangle-(b_{t_i}+e_{t_i,\ell})}{\|A_{t_i}\|^2} A_{t_i}^{\top},
$$ 
at $i$-th iteration, where  $\ell\in\{0,1,\ldots,k\}$, and $A_{t_i}$ is the $t_i$-th row of matrix $A$.

In the following analysis, let $\mathbb{E}_{\tau_{i}}$ denote the expectation with respect to the uniformly random sample ${\tau_{i}}$ conditioned upon
{the sampled $\tau_{j}$ for $j < i$, and let $\mathbb{E}$  denote expectation with respect to all random samples ${\tau_{j}}$ for $1 \leq j \leq i$,  where $i $ is the last iteration in the context in which $\mathbb{E}$ is applied.} 

We start our analysis by decomposing the squared error
\begin{align*}
\|x_{i}-x^*\|_2^2
   &=\left\|x_{i-1}-\frac{\langle A_{t_i}^T,x_{i-1}\rangle-(b_{t_i}+e_{t_i,\ell_{t_i}})}{\|A_{t_i}\|_2^2}A^{\top}_{t_i}-x^*\right\|_2^2\nonumber\\
    &=\|x_{i-1}-x^*\|_2^2+\frac{(\langle A_{t_i}^T,x_{i-1}-x^*\rangle-e_{t_i,\ell_{t_i}})^2}{\|A_{t_i}\|^2_2}\nonumber\\
    &~ -\frac{2}{\|A_{t_i}\|^2_2}\langle x_{i-1}-x^*,A_{t_i}^T\rangle(\langle A_{t_i}^T,x_{i-1}-x^*\rangle-e_{t_i,\ell_{t_i}})\nonumber\\
   &=\|x_{i-1}-x^*\|_2^2-\frac{\langle A_{t_i}^T,x_{i-1}-x^*\rangle^2}{\|A_{t_i}\|_2^2}+\frac{e_{t_i,\ell_{t_i}}^2}{\|A_{t_i}\|_2^2}
\end{align*}
Taking the expectation of the above equation, we can easily achieve   that
\begin{equation*}\label{eqn:decomp}
\begin{aligned}
    \mathbb{E}\|x_{i}-x^*\|_2^2 = \|x_{i-1}-x^*\|^2 + \mathbb{E}_{\tau_{i}}\mathbb{E}_{t_i}\mathbb{E}_{\ell_{t_i}}\frac{e_{t_i,\ell_{t_i}}^2}{\|A_{t_i}\|_2^2} - \mathbb{E}_{\tau_{i}}\mathbb{E}_{t_i}\frac{\langle A_{t_i}^T,x_{i-1}-x^*\rangle^2}{\|A_{t_i}\|_2^2}.
\end{aligned}
\end{equation*}
Therefore, the proof of Theorem~\ref{thm:multiple-main} can be divided into the computations of  the conditional expectation of the squared error  from the adversarial workers $\mathbb{E}_{\tau_{i}}\mathbb{E}_{t_i}\mathbb{E}_{\ell_{t_i}}\frac{e_{t_i,\ell_{t_i}}^2}{\|A_{t_i}\|_2^2}$ and the residual part $\mathbb{E}_{\tau_{i}}\mathbb{E}_{t_i}\frac{\langle A_{t_i}^T,x_{i-1}-x^*\rangle^2}{\|A_{t_i}\|_2^2} $ separately which are provided  in the following lemmas.
\begin{lemma}\label{lem:ax-b}
The conditional expectation of squared residual can be bounded below:
\begin{equation}\label{eqn:ax-b}
\mathbb{E}_{\tau_{i}}\mathbb{E}_{t_i}\frac{\langle A_{t_i}^T,x_{i-1}-x^*\rangle^2}{\|A_{t_i}\|_2^2} 
    \geq Q_{\min}\sigma_{\min}^2(\tilde{A})\frac{d_0}{d_1}\|x_{i-1}-x^*\|^2,
\end{equation}
where $\tilde{A}$ is the row normalized version of $A$. Thus, we have
\begin{equation}\label{eqn:ax-b-col}
   \|x_{i-1}-x^*\|^2 - \mathbb{E}_{\tau_{i}}\mathbb{E}_{t_i}\frac{\langle A_{t_i}^T,x_{i-1}-x^*\rangle^2}{\|A_{t_i}\|_2^2}
    \leq (1-Q_{\min}\sigma_{\min}^2(\tilde{A}) \frac{d_0}{d_1})\|x_{i-1}-x^*\|^2 \, ,
\end{equation}
where 
$ 
 Q_{\min} = \min_{g, t_i,\tau_i}\sum_{g=g_0({t_i})}^{n_t}q^{t_i}_g\prod_{s\in\tau_i\setminus\{t_i\}}\frac{b^s_g}{\binom{N_s}{n_s}}.
$
\end{lemma}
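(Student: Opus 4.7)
The plan is to unpack the nested expectation into an explicit sum over $\tau_i$ and $t \in \tau_i$, invoke Corollary~\ref{cor:pro_rowt_i} to identify the conditional probability that $t_i = t$ given $\tau_i$, and then combine a uniform lower bound with a standard singular-value argument.

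First, I would write
\[
\mathbb{E}_{\tau_i}\mathbb{E}_{t_i}\frac{\langle A_{t_i}^{\top},x_{i-1}-x^*\rangle^2}{\|A_{t_i}\|_2^2}
=\frac{1}{\binom{d_1}{d_0}}\sum_{\tau_i\in\binom{[d_1]}{d_0}}\sum_{t\in\tau_i}\mathbb{P}(t_i=t\mid\tau_i,x_{i-1})\,\frac{\langle A_t^{\top},x_{i-1}-x^*\rangle^2}{\|A_t\|_2^2}.
\]
By Corollary~\ref{cor:pro_rowt_i}, summing $\mathbb{P}(t_i,g)$ over admissible mode numbers $g\geq g_0(t)$ gives
\[
\mathbb{P}(t_i=t\mid\tau_i,x_{i-1})=\sum_{g=g_0(t)}^{n_t}q^{t}_{g}\prod_{s\in\tau_i\setminus\{t\}}\frac{b^{s}_{g}}{\binom{N_s}{n_s}},
\]
and by the definition of $Q_{\min}$ this quantity is at least $Q_{\min}$ for every admissible pair $(t,\tau_i)$.

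Next, I would pull $Q_{\min}$ out and switch the order of summation. Since each $t\in[d_1]$ belongs to exactly $\binom{d_1-1}{d_0-1}$ subsets of size $d_0$,
\[
\sum_{\tau_i\in\binom{[d_1]}{d_0}}\sum_{t\in\tau_i}\frac{\langle A_t^{\top},x_{i-1}-x^*\rangle^2}{\|A_t\|_2^2}
=\binom{d_1-1}{d_0-1}\sum_{t=1}^{d_1}\langle\widetilde{A}_t,x_{i-1}-x^*\rangle^2,
\]
where $\widetilde{A}_t=A_t/\|A_t\|_2$. The ratio $\binom{d_1-1}{d_0-1}/\binom{d_1}{d_0}$ simplifies to $d_0/d_1$, and the remaining sum is exactly $\|\widetilde{A}(x_{i-1}-x^*)\|_2^2$, which is bounded below by $\sigma_{\min}^2(\widetilde{A})\|x_{i-1}-x^*\|_2^2$ via the Rayleigh quotient (this is where we use $d_1\geq d_2$ so that $\widetilde{A}$ has full column rank, otherwise the bound is trivial). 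Combining these gives \eqref{eqn:ax-b}, and \eqref{eqn:ax-b-col} follows by subtracting both sides from $\|x_{i-1}-x^*\|_2^2$.

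The main obstacle is not analytic but bookkeeping: one must make sure that $\mathbb{P}(t_i=t\mid\tau_i,x_{i-1})$ as produced by the algorithm (with ties broken arbitrarily and rows without a qualifying mode contributing zero) coincides with the sum of $\mathbb{P}(t_i,g)$ over $g\geq g_0(t)$ from Corollary~\ref{cor:pro_rowt_i}, and that the definition of $Q_{\min}$ is indeed a valid uniform lower bound for this probability across all sampled $\tau_i$ and all $t\in\tau_i$. Once this identification is pinned down, the chain of inequalities above is essentially mechanical.
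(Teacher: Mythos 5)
Your proposal is correct and follows essentially the same route as the paper's proof: expand the expectation over the uniformly sampled $\tau_i$ and the selected row via Corollary~\ref{cor:pro_rowt_i}, bound the selection probability below by $Q_{\min}$, count that each row lies in $\binom{d_1-1}{d_0-1}$ subsets so the ratio gives $d_0/d_1$, and finish with $\|\widetilde{A}(x_{i-1}-x^*)\|_2^2\geq\sigma_{\min}^2(\widetilde{A})\|x_{i-1}-x^*\|_2^2$. The bookkeeping point you flag (identifying the algorithm's selection probability with $\sum_{g\geq g_0(t)}\mathbb{P}(t_i,g)$, with non-mode events contributing nothing) is handled the same way, implicitly, in the paper.
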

\begin{proof}
The expectation of the squared residual can be represented as
    \begin{align*}  &~\mathbb{E}_{\tau_{i}}\mathbb{E}_{t_i}\frac{\langle A_{t_i}^T,x_{i-1}-x^*\rangle^2}{\|A_{t_i}\|_2^2}\\
    = &\mathbb{E}_{\tau_{i}}\sum_{t_i\in\tau_i}\sum_{g=g_0(t_i)}^{n_{t_i}}\sum_{\ell = 0}^k\mathbb{P}(t_i,\ell,g)\left\|\frac{A_{t_i}(x_{i-1}-x^*)}{\|A_{t_i}\|} \right\|^2\\
    = &\sum_{\tau_i\in\binom{[d_1]}{d_0}}p_{x_{i-1}(\tau_i)}\sum_{t_i\in\tau_i}\sum_{g=g_0(t_i)}^{n_{t_i}}\mathbb{P}(t_i,g)\left\|\frac{A_{t_i}(x_{i-1}-x^*)}{\|A_{t_i}\|} \right\|^2\\
    = &\sum_{\tau_i\in\binom{[d_1]}{d_0}}p_{x_{i-1}(\tau_i)}\sum_{t_i\in\tau_i}\sum_{g=g_0(t_i)}^{n_{t_i}}q^{t_i}_g\prod_{s\in\tau_i\setminus\{t_i\}}\frac{b^s_g}{\binom{N_s}{n_s}}\left\|\frac{A_{t_i}(x_{i-1}-x^*)}{\|A_{t_i}\|} \right\|^2.
 \end{align*}
Recall  that $\tau_i \sim \text{unif}(\binom{[d_1]}{d_0}) $. We thus have  $ p_{x_{i-1}}(\tau_i) =1/\binom{d_1}{d_0} = \frac{d_0!(d_1-d_0)!}{d_1!}$. 
 Therefore, 
\begin{align*} \mathbb{E}_{\tau_{i}}\mathbb{E}_{t_i}\frac{\langle A_{t_i}^T,x_{i-1}-x^*\rangle^2}{\|A_{t_i}\|_2^2}
    &\geq Q_{\min}\cdot\frac{d_0!(d_1-d_0)!}{d_1!}\cdot \sum_{\tau_i\in\binom{[d_1]}{d_0}}\sum_{{t_i}\in\tau_i}\left\|\frac{A_{t_i}(x_{i-1}-x^*)}{\|A_{t_i}\|} \right\|^2\\
    & \geq Q_{\min}\cdot\frac{d_0!(d_1-d_0)!}{d_1!}\cdot \binom{d_1-1}{d_0-1}\cdot\|\Tilde{A}(x_{i-1}-x^*)\|^2_{2}\\
    & \geq  Q_{\min}\cdot\frac{d_0}{d_1}\cdot\sigma_{\min}^2(\Tilde{A})\|x_{i-1}-x^*\|^2_{2},
\end{align*}
i.e., \eqref{eqn:ax-b} is derived.
Hence, we also have \eqref{eqn:ax-b-col}. 
\end{proof}

\begin{lemma}\label{lem:err}
The expectation of the squared error from the adversarial workers can be bounded above by $\sum\limits_{t_i\in[d_1]}\beta_{t_i} \|\tilde{e}_{t_i}\|_2^2$, i.e.,
\begin{equation}
   \label{eqn:err}
\mathbb{E}_{\tau_{i}}\mathbb{E}_{t_i}\mathbb{E}_{\ell}\frac{e_{t_i,\ell}^2}{\|A_{t_i}\|_2^2}\leq\sum_{t_i\in[d_1]}\beta_{t_i} \|\tilde{e}_{t_i}\|_2^2,
\end{equation}
where  $\Tilde{e}_{t_i}^2 =\frac{e_{t_i,\ell}^2}{\|A_{t_i}\|_2^2}$ and $\beta_{t_i} = \sum\limits_{\Tilde{\tau}_{t_i,i}\in\binom{[d_1-1]}{d_0-1}}\sum\limits_{g=g_0({t_i})}^{n_{t_i}} \frac{1}{\binom{d_1}{d_0}}Q_{\max}(t_i,g,\tilde{\tau}_{t_i,i})$ with  \\
$Q_{\max}(t_i,g,\tau_i)=\max\limits_{\ell} \frac{\binom{N_{t_i}p_{t_i,\ell}}{g}a^{t_i}_{g,\ell}}{\binom{N_{t_i}}{n_{t_i}}}\prod\limits_{s\in\tau_i}\frac{b^s_g}{\binom{N_s}{n_s}}$.
\end{lemma}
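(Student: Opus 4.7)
The plan is to expand the iterated expectation explicitly using the joint distribution of the chosen row set $\tau_i$, the selected mode row $t_i \in \tau_i$, the mode number $g$, and the error category $\ell$, and then bound it termwise by pulling out the worst-case category. Using the uniform sampling of $\tau_i$ and the formula from Corollary~\ref{cor:pro_rowt_i} together with its category-refined version, I would write
\begin{equation*}
\mathbb{E}_{\tau_i}\mathbb{E}_{t_i}\mathbb{E}_{\ell}\frac{e_{t_i,\ell}^2}{\|A_{t_i}\|_2^2}
=\sum_{\tau_i\in\binom{[d_1]}{d_0}}\frac{1}{\binom{d_1}{d_0}}
\sum_{t_i\in\tau_i}\sum_{g=g_0(t_i)}^{n_{t_i}}\sum_{\ell=0}^{k}
\mathbb{P}(t_i,\ell,g)\,\frac{e_{t_i,\ell}^2}{\|A_{t_i}\|_2^2}.
\end{equation*}

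Next I would bound $\mathbb{P}(t_i,\ell,g)$ by $Q_{\max}(t_i,g,\tau_i\setminus\{t_i\})$ for every $\ell$ (this is immediate from the definition of $Q_{\max}$ as a maximum over $\ell$), so that
\begin{equation*}
\sum_{\ell=0}^{k}\mathbb{P}(t_i,\ell,g)\,\frac{e_{t_i,\ell}^2}{\|A_{t_i}\|_2^2}
\leq Q_{\max}(t_i,g,\tau_i\setminus\{t_i\})\sum_{\ell=0}^k\frac{e_{t_i,\ell}^2}{\|A_{t_i}\|_2^2}
=Q_{\max}(t_i,g,\tau_i\setminus\{t_i\})\,\|\tilde e_{t_i}\|_2^2,
\end{equation*}
using the definition $\|\tilde e_t\|_2^2=\sum_\ell e_{t,\ell}^2/\|A_t\|_2^2$.

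Then I would swap the order of summation, writing $\sum_{\tau_i\in\binom{[d_1]}{d_0}}\sum_{t_i\in\tau_i}=\sum_{t\in[d_1]}\sum_{\tau_i\ni t}$, and for each fixed $t$ reindex the remaining subsets $\tau_i\setminus\{t\}$ of $[d_1]\setminus\{t\}$ of size $d_0-1$ with $\tilde\tau_{t,i}\in\binom{[d_1-1]}{d_0-1}$ (a bijection, which does not change the combinatorial weight $1/\binom{d_1}{d_0}$). This gives
\begin{equation*}
\mathbb{E}_{\tau_i}\mathbb{E}_{t_i}\mathbb{E}_{\ell}\frac{e_{t_i,\ell}^2}{\|A_{t_i}\|_2^2}
\leq \sum_{t\in[d_1]}\|\tilde e_t\|_2^2
\sum_{\tilde\tau_{t,i}\in\binom{[d_1-1]}{d_0-1}}\sum_{g=g_0(t)}^{n_t}\frac{1}{\binom{d_1}{d_0}}
Q_{\max}(t,g,\tilde\tau_{t,i})=\sum_{t\in[d_1]}\beta_t\,\|\tilde e_t\|_2^2,
\end{equation*}
matching the definition of $\beta_t$ in \eqref{eq:mul_main_para}.

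The main obstacle is mostly bookkeeping: being careful that the $\ell=0$ (reliable) category contributes zero error so the max-over-$\ell$ relaxation is not wasteful, and checking that $Q_{\max}$ as defined (with $\prod_{s\in\tau_i}$) is being applied to the $d_0-1$ indices $\tau_i\setminus\{t_i\}$ rather than all $d_0$ indices — a subtle notational point that needs to be matched against the statement. Once the indexing is aligned, the rest is a clean swap-and-reindex argument; there is no concentration or spectral input needed here, in contrast with Lemma~\ref{lem:ax-b}.
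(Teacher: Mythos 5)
Your proposal is correct and follows essentially the same route as the paper's proof: expand the iterated expectation via $\mathbb{P}(t_i,\ell,g)$, bound the sum over $\ell$ by $Q_{\max}(t_i,g,\tau_i\setminus\{t_i\})\,\|\tilde e_{t_i}\|_2^2$, and then swap and reindex the sums over $\tau_i$ and $t$ to recover $\beta_t$. Your remark about $Q_{\max}$ being applied to the $d_0-1$ indices $\tau_i\setminus\{t_i\}$ matches how the paper actually uses it in its proof.
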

\begin{proof}
The expectation of the squared error from the adversarial workers can be represented as
$\mathbb{E}_{\tau_{i}}\mathbb{E}_{t_i}\mathbb{E}_{\ell}\frac{e_{t_i,\ell}^2}{\|A_{t_i}\|_2^2}=\mathbb{E}_{\tau_{i}}\sum\limits_{{t_i}\in\tau_i}\sum\limits_{g=g_0({t_i})}^{n_{t_i}}\sum\limits_{\ell = 0}^k\frac{\binom{N_{t_i}p_{t_i,\ell}}{g}a^{t_i}_{g,\ell}}{\binom{N_{t_i}}{n_{t_i}}}\prod\limits_{s\in\tau_i\setminus\{{t_i}\}}\frac{b^s_g}{\binom{N_s}{n_s}}\frac{e_{{t_i},\ell}^2}{\|A_{{t_i}}\|_2^2}$.
To simplify the expressions, we let   \\
$Q_{\max}({t_i},g,\tau_i\setminus\{{t_i}\})=\max\limits_{\ell} \frac{\binom{N_tp_{{t_i},\ell}}{g}a^{t_i}_{g,\ell}}{\binom{N_{t_i}}{n_{t_i}}}\prod\limits_{s\in\tau_i\setminus\{{t_i}\}}\frac{b^s_g}{\binom{N_s}{n_s}}$, \\
$\Tilde{e}_{{t_i},\ell}^2 = \frac{e_{{t_i},\ell}^2}{\|A_{{t_i}}\|_2^2}$, 
 and $\Tilde{e}_{{t_i}} = (e_{t,0},e_{{t_i},1},\ldots, e_{{t_i},k})$ . 
Then \eqref{eqn:err} can be achieved by 
\begin{equation*} 
    \begin{aligned}
\mathbb{E}_{\tau_{i}}\mathbb{E}_{t_i}\mathbb{E}_{\ell}\frac{e_{t_i,\ell}^2}{\|A_{t_i}\|_2^2}
    =& \mathbb{E}_{\tau_{i}}\sum_{{t_i}\in\tau_i}\sum_{g=g_0({t_i})}^{n_{t_i}}\sum_{\ell = 0}^k\frac{\binom{N_{t_i}p_{{t_i},\ell}}{g}a^{t_i}_{g,\ell}}{\binom{N_{t_i}}{n_{t_i}}}\prod_{s\in\tau_i\setminus\{t_i\}}\frac{b^s_g}{\binom{N_s}{n_s}}\Tilde{e}_{t_i,\ell}^2\\
    \leq & \mathbb{E}_{\tau_{i}}\sum_{{t_i}\in\tau_i}\sum_{g=g_0({t_i})}^{n_{t_i}}Q_{\max}(t_i,g,\tau_i\setminus\{t_i\})\sum_{\ell = 0}^k \Tilde{e}_{t_i,\ell}^2\\
=&\mathbb{E}_{\tau_{i}}\sum_{{t_i}\in\tau_i}\sum_{g=g_0({t_i})}^{n_{t_i}}Q_{\max}(t_i,g,\tau_i\setminus\{t_i\})\|\Tilde{e}_{t_i}\|_2^2\\
    =&\sum_{\tau_i\in\binom{[d_1]}{d_0}}\frac{1}{\binom{d_1}{d_0}}\sum_{{t_i}\in\tau_i}\sum_{g=g_0({t_i})}^{n_{t_i}}Q_{\max}(t_i,g,\tau_i\setminus\{t_i\})\|\Tilde{e}_{t_i}\|_2^2\\
=&\sum_{t\in[d_1]}\sum_{\Tilde{\tau}_{t,i}\in \binom{[d_1]\setminus\{t\}}{d_0-1}}\sum_{g=g_0({t})}^{n_{t}} \frac{1}{\binom{d_1}{d_0}}Q_{\max}(t,g,\tilde{\tau}_{t,i})\|\tilde{e}_t\|_2^2
    =\sum_{t\in[d_1]}\beta_t \|\tilde{e}_t\|_2^2
    \end{aligned}
\end{equation*}
with $\beta_t = \sum\limits_{\Tilde{\tau}_{t,i}\in\binom{[d_1-1]}{d_0-1}}\sum\limits_{g=g_0({t})}^{n_{t_i}} \frac{1}{\binom{d_1}{d_0}}Q_{\max}(t,g,\tilde{\tau}_{t,i} )$.
\end{proof}
 Notice that
\begin{equation}\label{eqn:Qmax} \sum_{g=g_0(t)}^{n_t}\sum_{\ell=0}^{k}\frac{\binom{N_tp_{t,\ell}}{g}a^t_{g,\ell}}{\binom{N_t}{n_t}}\prod_{s\in\tau_i\setminus\{t\}}\frac{b^s_g}{\binom{N_s}{n_s}}\leq\sum_{g=g_0(t)}^{n_t}\mathbb{P}(t,g)\leq 1,
 \end{equation}
 we thus have $\sum_{g=g_{0}(t)}^{n_t}Q_{\max}(t,g,\tau_i\setminus\{t\}) \leq 1$, and
    \begin{align*}
\mathbb{E}_{\tau_{i}}\mathbb{E}_{t}\mathbb{E}_{\ell}\frac{e_{t,\ell}^2}{\|A_{t}\|_2^2}
    \leq&\sum_{\tau_i\in\binom{[d_1]}{d_0}}\frac{1}{\binom{d_1}{d_0}}\sum_{{t}\in\tau_i}\sum_{g=g_0({t})}^{n_{t_i}}Q_{\max}(t,g,\tau_i\setminus\{t\})\|\Tilde{e}_{t}\|_2^2
    \leq\frac{d_0}{d_1}\sum_{t=1}^{d_1}\|\tilde{e}_t\|_2^2.
    \end{align*}

\begin{proof}[The proof of Theorem~\ref{thm:multiple-main}]
    Combining Lemma~\ref{lem:ax-b} and Lemma~\ref{lem:err}, we thus have Theorem~\ref{thm:multiple-main}.
\end{proof}

Next we provide some remarks for our main result Theorem~\ref{thm:multiple-main}.
 
\begin{remark}
\begin{enumerate}
\item[(i)]  $0<1-\frac{d_0}{d_1}Q_{\min}\sigma_{\min}^2(\tilde{A})<1$, provided that  $d_0\leq d_2$. This results   from the facts that   $\sigma_{\min}^2(\tilde{A})\leq \frac{d_1}{d_2}$ (since $\sum_{i=1}^{d_2}\sigma_i^2(\tilde{A})=d_1$) and $0 \leq Q_{\min}\leq 1$ (by \eqref{eqn:Qmax}).
    \item[(ii)]  From \eqref{eq:mul_main_para}, one can see the relation between $\beta_t$ and $d_0$ is complicated.  
An example in Table \ref{tab:d0_effect} shows that increasing $d_0$, to some extent, can decrease $\beta_t$ and thus, improves the speed of convergence. For more details about   the optimal choice of  $d_0$, one can refer to Appendix~\ref{appendix:opt_d0}. Meanwhile, one should be aware of that  larger $d_0$ leads to more communication cost. Thus, in practice, finding an optimal $d_0$ is not just minimizing $\beta_t$ but also reducing the communication cost.
\item[(iii)]
When adversaries $\{\widetilde{e}_t\}_t$ are relatively small, the method without the block-list can guarantee a convergence error with the same or smaller magnitude as $\{\widetilde{e}_t\}_t$ using the right parameters. However, one can conclude from  \eqref{eq:mul_main_para}, when adversaries $\{\widetilde{e}_t\}_t$ are relatively larger, the convergence is not guaranteed. Therefore, it is crucial to introduce the block-list method for   excluding the adversarial workers.
 
\begin{table}[!h]
\caption{ Total number of workers $N_r \equiv 10$, number of error categories $k=3$, the adversarial rate $p_r = p/k$.}
    \label{tab:d0_effect}
    \centering
    \begin{tabular}{||c|c|c|c|c||}
         \hline
         $p$ &$n_r$ & $d_0 $ & $Q$ & $\beta_t$ \\
         \hline
         \multirow{3}{4em}{\quad $0.6$}
         & 5 &2 &$4.25\times 10^{-3}$ &$8.5\times10^{-4}$ \\
         & 5 &3 &$3.3\times 10^{-4}$ &$9.9\times 10^{-5}$ \\
         & 5 &5 &$3.63\times 10^{-6}$ &$1.82\times 10^{-6}$  \\
         \hline
    \end{tabular}
\end{table}
\end{enumerate}
\end{remark}
\subsection{Block-list method}
In this section, we use  $P_\ell$ to denote the proportion of workers in category $\ell$ among the total of $N$ workers. Thus, the number of workers in category $\ell$ is   $NP_\ell$. As per Algorithm~\ref{alg:multi_block_list}, after $S$ iterations, the worker $w^*$ with the highest count of non-mode selections, $c^+_{w^*}$, will be added to the block-list. To assess the effectiveness of the proposed method, it is crucial to calculate the probability that a bad or reliable worker is included in the block-list. This problem can be mathematically reformulated as follows.
\begin{problem}
Let $c^+_w(S)\coloneqq c^+_w$ (resp. $c^0_w(S)\coloneqq c^0_w$) be the counters of the worker $w$ being non-mode (resp.    mode or in no mode case) among $S$ iterations. Then we have $0\leq c^+_w,c^0_w \leq S$ and $\sum_{w=1}^{N }(c^+_w + c^0_w) = nS$. The probability that $w^*$ is in the block-list after S iterations can be calculated as follows:
\begin{equation}
    \begin{aligned}
    \mathbb{P}_{\textit{bl}}(w^*)
    = \mathbb{P}(c^+_{w^*} > c^+_w,\forall w\neq w^*) \text{ s.t. } \sum_{w=1}^{N }c^+_w + c^0_w = nS.
    \end{aligned}
\end{equation}
\end{problem}
Note that this probability can be calculated by using integer dynamic programming or estimated by  Monte Carlo simulations.\\
\begin{lemma}\label{lmm:pro_bl}
Run Alg.~\ref{alg:multi_block_list}  with $S$ iteration. 
 Then  the conditional probability that a reliable worker $w_0$ is  in the block-list is $
\frac{P_0\mathbb{P}_{\textit{bl}}(w_0)}{\sum_{i=0}^{k}P_{i}\mathbb{P}_{\textit{bl}}(w_{i})}$. 
Similarly, the conditional probability that a bad worker  $w_{\ell}$ in category $\ell$ is  in the block-list is $
\frac{P_{\ell}\mathbb{P}_{\textit{bl}}(w_{\ell})}{\sum_{i=0}^{k}P_{i}\mathbb{P}_{\textit{bl}}(w_{i})}$.
\end{lemma}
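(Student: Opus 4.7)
The statement is essentially a Bayesian identity, so my plan is to cast it in the standard prior/likelihood/posterior form and invoke Bayes' rule together with the law of total probability.

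First I would set up the sample space carefully. Consider choosing a worker uniformly at random from the pool of $N$ workers, and let $E_\ell$ be the event that the chosen worker belongs to category $C_\ell$, so that $\mathbb{P}(E_\ell)=P_\ell$ by the definition of $P_\ell$ given at the start of the block-list subsection. Let $B$ denote the event that the chosen worker ends up in the block-list after $S$ iterations of Algorithm~\ref{alg:multi_block_list}. Because the randomness producing $B$ (the row-sampling $\tau_j$ and the worker-sampling within each selected row) depends only on the category membership of a worker through the distribution of the counter $c^+_w$, the conditional probability $\mathbb{P}(B\mid E_\ell)$ coincides with the quantity $\mathbb{P}_{\textit{bl}}(w_\ell)$ defined in the preceding problem statement (a worker in category $\ell$ has category-dependent response pattern, which drives whether $c^+_w$ exceeds all other counters).

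Next I would apply Bayes' rule directly. For the reliable category,
\begin{equation*}
\mathbb{P}(E_0\mid B)=\frac{\mathbb{P}(B\mid E_0)\,\mathbb{P}(E_0)}{\mathbb{P}(B)}
=\frac{P_0\,\mathbb{P}_{\textit{bl}}(w_0)}{\mathbb{P}(B)},
\end{equation*}
and by the law of total probability over the partition $\{E_0,E_1,\ldots,E_k\}$,
\begin{equation*}
\mathbb{P}(B)=\sum_{i=0}^{k}\mathbb{P}(B\mid E_i)\,\mathbb{P}(E_i)=\sum_{i=0}^{k}P_i\,\mathbb{P}_{\textit{bl}}(w_i).
\end{equation*}
Combining yields the first claimed formula, and the same argument with $E_0$ replaced by $E_\ell$ yields the second.

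The only non-routine point — and the one I would emphasize in the write-up — is justifying that $\mathbb{P}(B\mid E_\ell)$ is well-defined and equal to $\mathbb{P}_{\textit{bl}}(w_\ell)$ independently of which specific worker inside $C_\ell$ is chosen. This follows from the symmetry of Alg.~\ref{alg:multi_block_list}: within a single category $C_\ell$, all workers produce the identical residual $b_r+e_{\ell,r}-\langle x_j,A_r^\top\rangle$ whenever they hold row $r$, and they are sampled interchangeably from $D_r$, so the distribution of $c^+_w$ (and hence of the event $B$) depends on $w$ only through its category. Once this exchangeability is noted, the rest is a one-line application of Bayes' theorem with no further calculation required.
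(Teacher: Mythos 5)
Your proposal is correct and follows essentially the same route as the paper: the paper's proof also forms the category-weighted quantities $P_\ell N\,\mathbb{P}_{\textit{bl}}(w_\ell)$ and takes their ratio over $\sum_{i=0}^{k}P_i N\,\mathbb{P}_{\textit{bl}}(w_i)$, which is exactly your Bayes'-rule computation with the factor $N$ cancelling. If anything, your write-up is a tighter formalization than the paper's brief argument, since you make the sample space (a uniformly chosen worker), the conditioning event, and the within-category exchangeability justifying $\mathbb{P}(B\mid E_\ell)=\mathbb{P}_{\textit{bl}}(w_\ell)$ explicit.
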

\begin{proof} First notice that we have  
 the following two facts:
(i)The probability that a reliable worker is in the block-list is
$P_0N\mathbb{P}_{\textit{bl}}(w_0)$. 
(ii) The probability that a bad worker $w_{\ell}$ in category $\ell$ is   in the block-list is
$P_{\ell}N\mathbb{P}_{\textit{bl}}(w_{\ell})$.  Therefore,
the probability  that a worker, either reliable or bad, i in the block-list is
$\sum_{i=0}^{k}P_{i}N\mathbb{P}_{\textit{bl}}(w_{i})$. The conditional probabilities can be easily computed by considering the ratios  $\frac{P_0N\mathbb{P}_{\textit{bl}}(w_0)}{\sum_{i=0}^{k}P_{i}N\mathbb{P}_{\textit{bl}}(w_{i})}$ and $\frac{P_{\ell}N\mathbb{P}_{\textit{bl}}(w_{\ell})}{\sum_{i=0}^{k}P_{i}N\mathbb{P}_{\textit{bl}}(w_{\ell})}$.
\end{proof}
 
To illustrate how the quantities changes with respect to $S$ in Lemma \ref{lmm:pro_bl}, we consider the following example.
\begin{example}
   Assume that there are two categories of workers i.e. $k = 0,1$, and 5 workers $w_i, i = 1,\ldots, 5$ in total with $w_1, w_2 \in C_1, w_3, w_4,w_5 \in C_0$. Let $n_r \equiv 3$. Note that $\mathbb{P}_{\textit{bl}}(w_1)=\mathbb{P}_{\textit{bl}}(w_2)\coloneqq \mathbb{P}_{\textit{bl}}^1$ and $\mathbb{P}_{\textit{bl}}(w_3)=\mathbb{P}_{\textit{bl}}(w_4)=\mathbb{P}_{\textit{bl}}(w_5)\coloneqq \mathbb{P}_{\textit{bl}}^0$. The probability is estimated by Monte Carlo simulations. We simulated the experiment 100 times and count the numbers of experiments where each worker is listed in the block-list. Those numbers are used to calculate the frequency and estimate the probability. The estimated results are summarized in  Table~\ref{tab:bl}. 
\begin{table}[!h]
    \caption{Conditional probability of being in block-list.}
    \label{tab:bl}
    \centering
    \begin{tabular}{||c|c|c|c|c||}
         \hline
         $S$ &$5$ & $10 $ & $50$ & $100$ \\
         \hline
         $\mathbb{P}_{\textit{bl}}^1$ &0.403 &$0.452$ &$0.5$ &$0.5$\\
         \hline
         $\mathbb{P}_{\textit{bl}}^0$ &$0.065$ &$0.032$ & $\sim 0$& $\sim 0$\\
         \hline
    \end{tabular}
\end{table}
  Table~\ref{tab:bl} shows that the probability of an adversarial worker in the block-list increases as the number of iterations $S$ increases. Meanwhile, the probability of a reliable worker in the block-list decreases. Using the method with the block-list, we are able to avoid choosing the results from the adversarial workers. As a results, the probability of using the adversarial workers decreases, i.e., $\beta_t$ decreases.
\end{example}
\section{Simulations}
In this section, we evaluate the performance of our approaches for solving consistent linear systems through simulations. We randomly generate a row-normalized matrix $A\in\mathbb{R}^{2400 \times 100}$ and a vector $x\in\mathbb{R}^{100}$, both from a normalized Gaussian distribution, and set $b = Ax$. For simplicity, we assume that each row has the same number of error categories $k$ and the same adversarial rate $p_r = p/k$, where $p$ is the total adversarial rate.
The linear system $Ax = b$ is solved using Algorithm~\ref{alg:multi_block_list} with and without the block-list. At each iteration, $d_0$ rows of $A$ are chosen uniformly at random, and for each row, $n_r$ workers are selected from the $N_r$ workers to participate in the calculation. 
We further assume  $N_r$ are the same and  $n_r$ are equal to $n$ for all $r$.
 The simulation results show how the number of used rows $d_0$, the number of used workers $n$, the total adversary rate $p$, and the number of error categories $k$ affect the performance.
\begin{figure}[!h]
    \centering
    \begin{subfigure}[t]{0.45\textwidth}
        \centering
         \includegraphics[width=\textwidth]{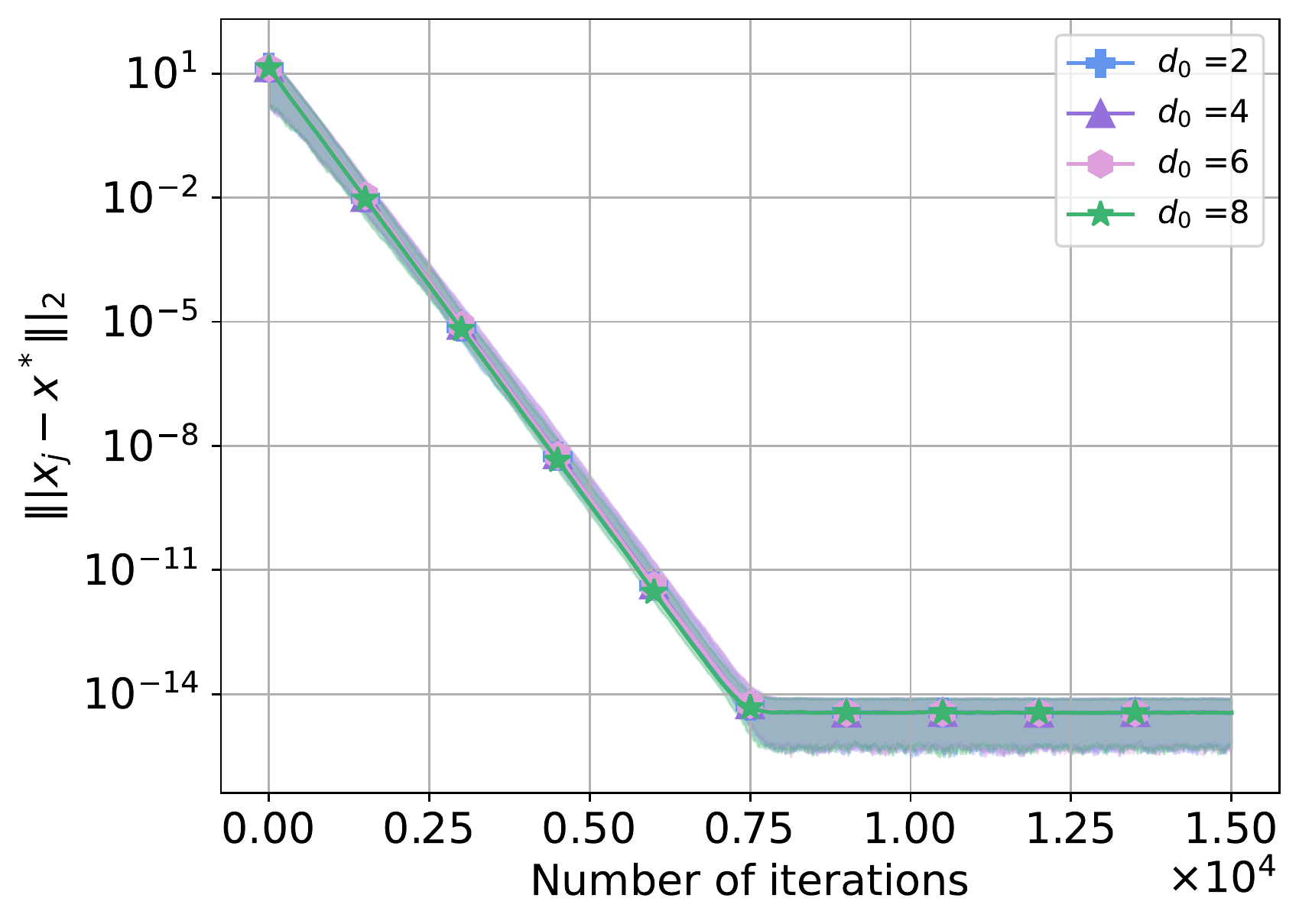}
         \caption{Error vs. $d_0$: $p=0.2$; using Alg.~\ref{alg:multi_block_list} with block-list.}
         \label{fig:varyds_0.2_wbl_small}
    \end{subfigure}
    \hspace{1em}
    \begin{subfigure}[t]{0.45\textwidth}
        \centering
         \includegraphics[width=\textwidth]{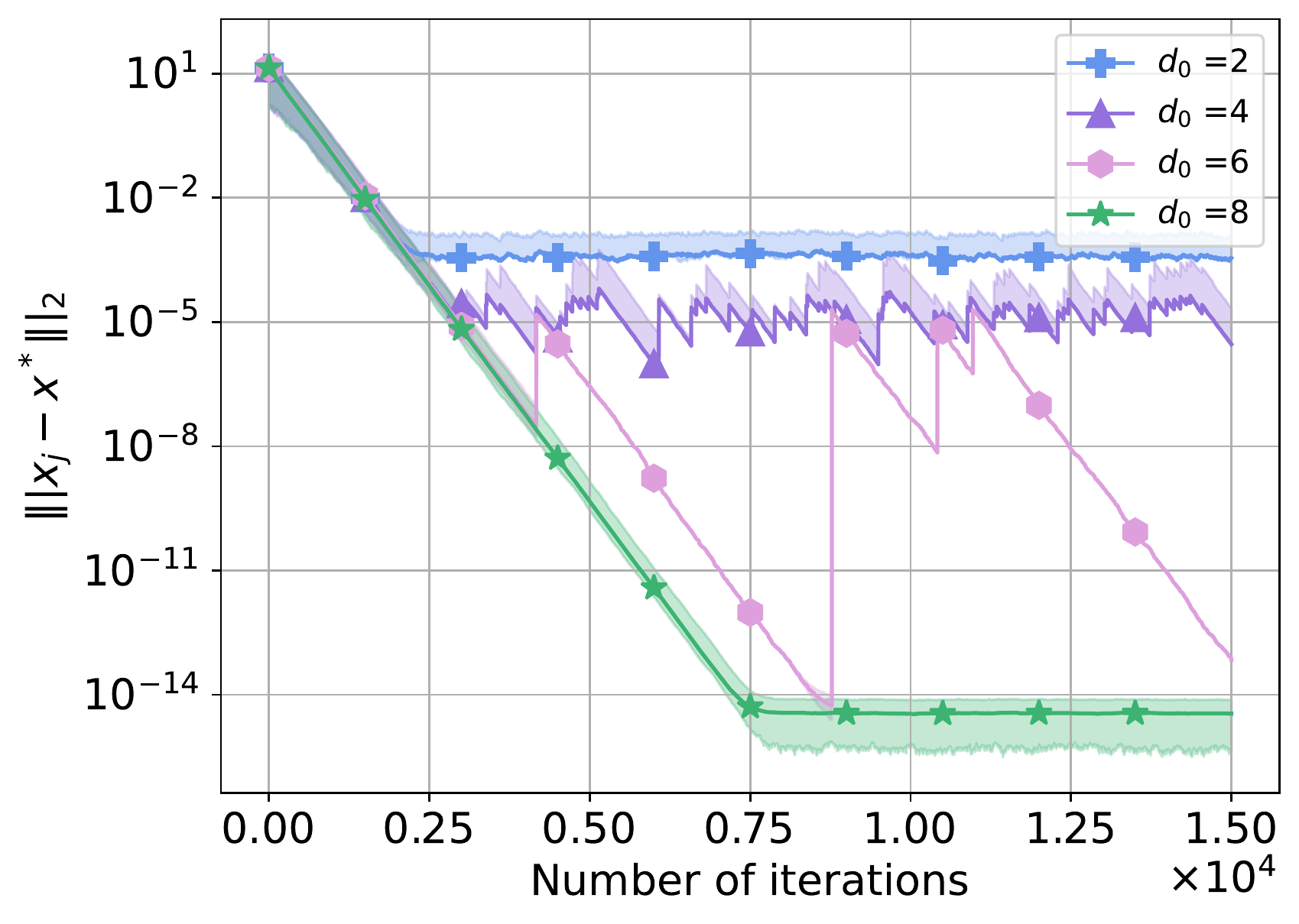}
         \caption{Error vs. $d_0$: $p=0.2$; using Alg.~\ref{alg:multi_block_list} without block-list.}
         \label{fig:varyds_0.2_wob_small}
    \end{subfigure}\\
    \centering
    \vspace{1em}
    \begin{subfigure}[t]{0.45\textwidth}
        \centering
         \includegraphics[width=\textwidth]{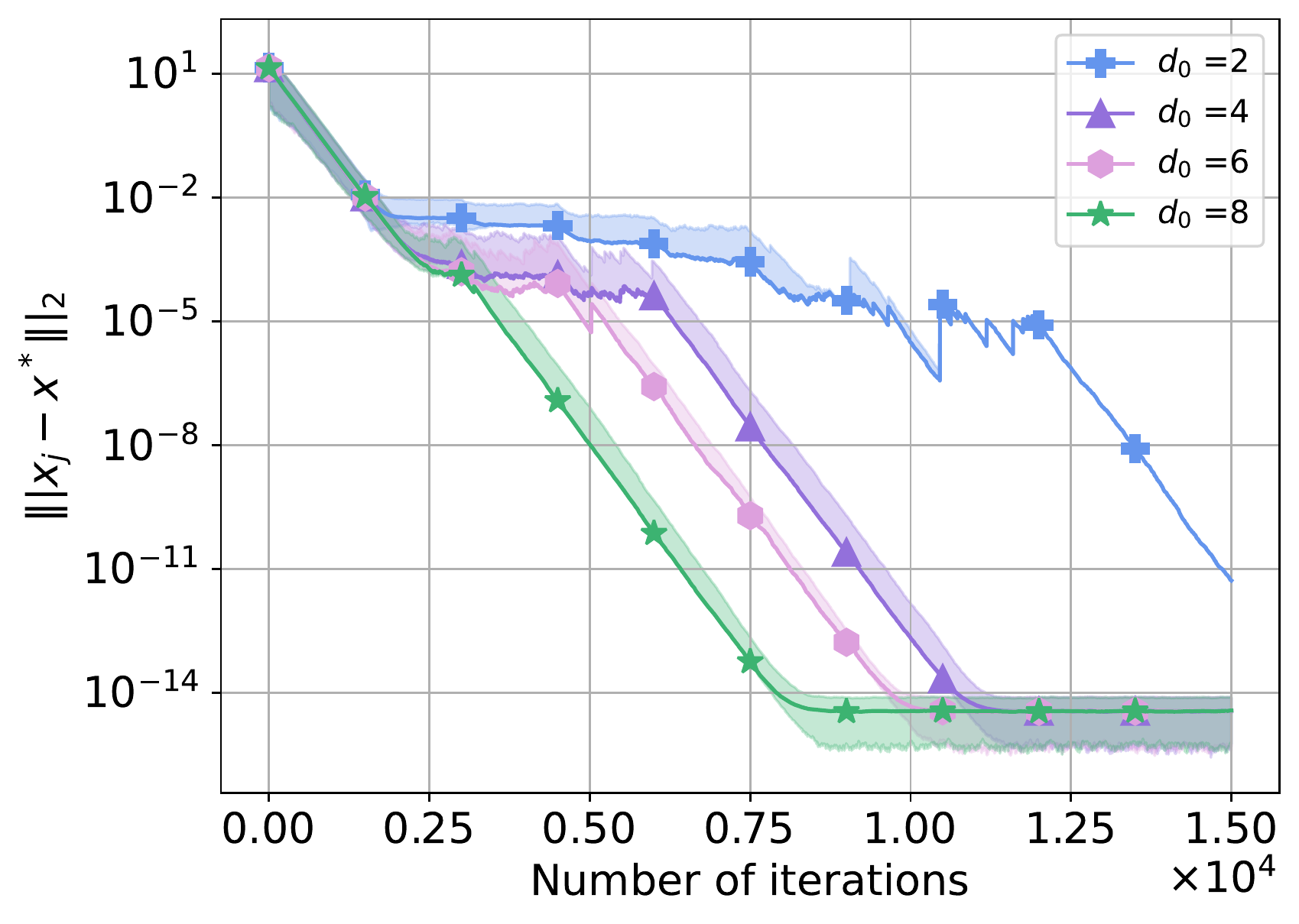}
         \caption{Error vs. $d_0$: $p=0.6$; using Alg.~\ref{alg:multi_block_list} with block-list.}
         \label{fig:varyds_0.6_wbl_small}
    \end{subfigure}
    \hspace{1em}
    \begin{subfigure}[t]{0.45\textwidth}
        \centering
         \includegraphics[width=\textwidth]{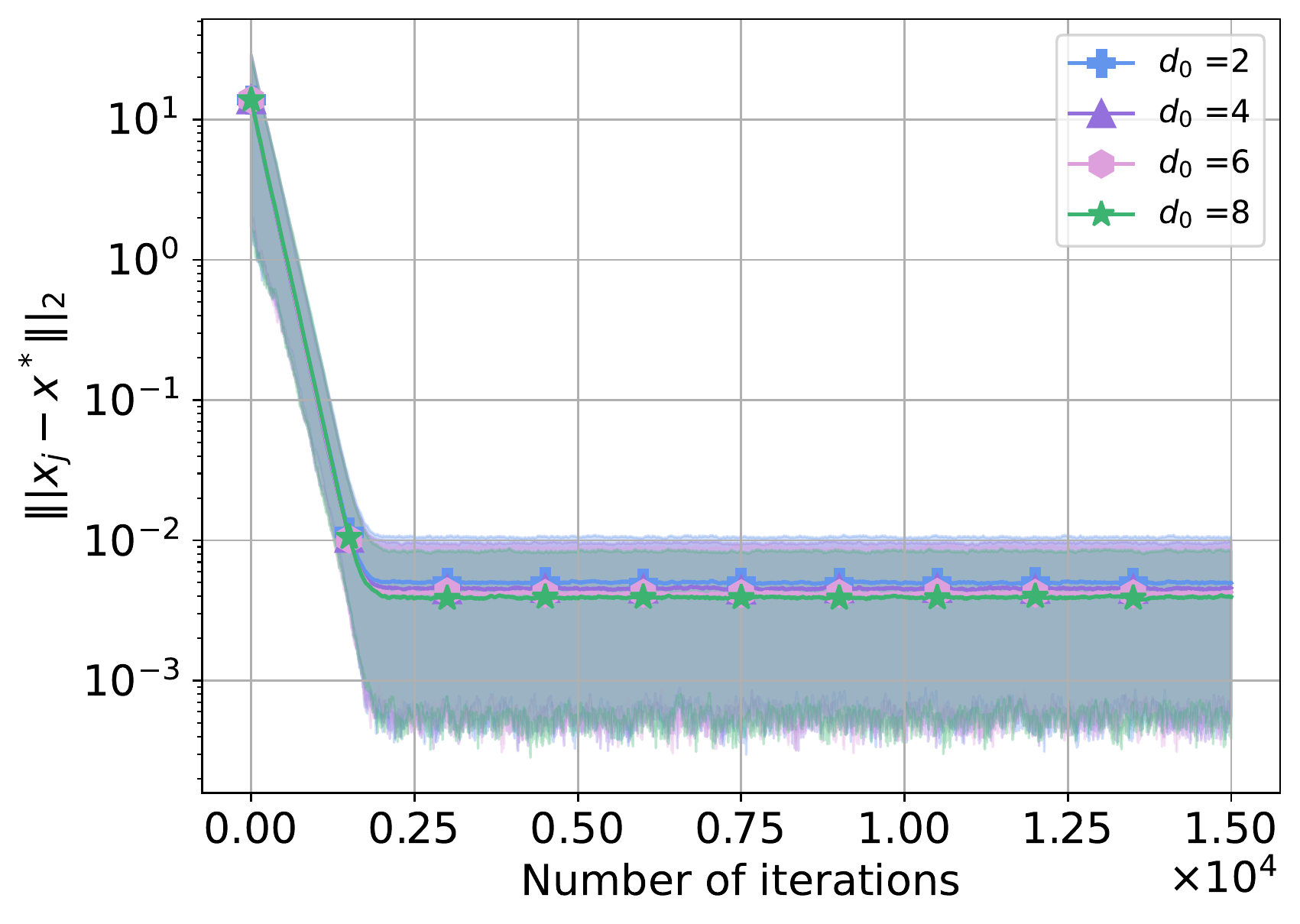}
         \caption{Error vs. $d_0$: $p=0.6$; using Alg.~\ref{alg:multi_block_list} without block-list.}
         \label{fig:varyds_0.6_wobl_small}
    \end{subfigure}
    \caption{Effects of the number of used rows $d_0$ on convergence: $N_r=20, n_r=4, k=3, \|e\|_{\infty} = 10^{-3}$. The error norms were averaged over $50$ trials (the solid lines) with $90\%$ percentiles (the shaded areas).}
    \label{fig:varyds_smallerr}
\end{figure}
\begin{figure}[!h]
    \centering
    \begin{subfigure}[t]{0.45\textwidth}
        \centering
         \includegraphics[width=\textwidth]{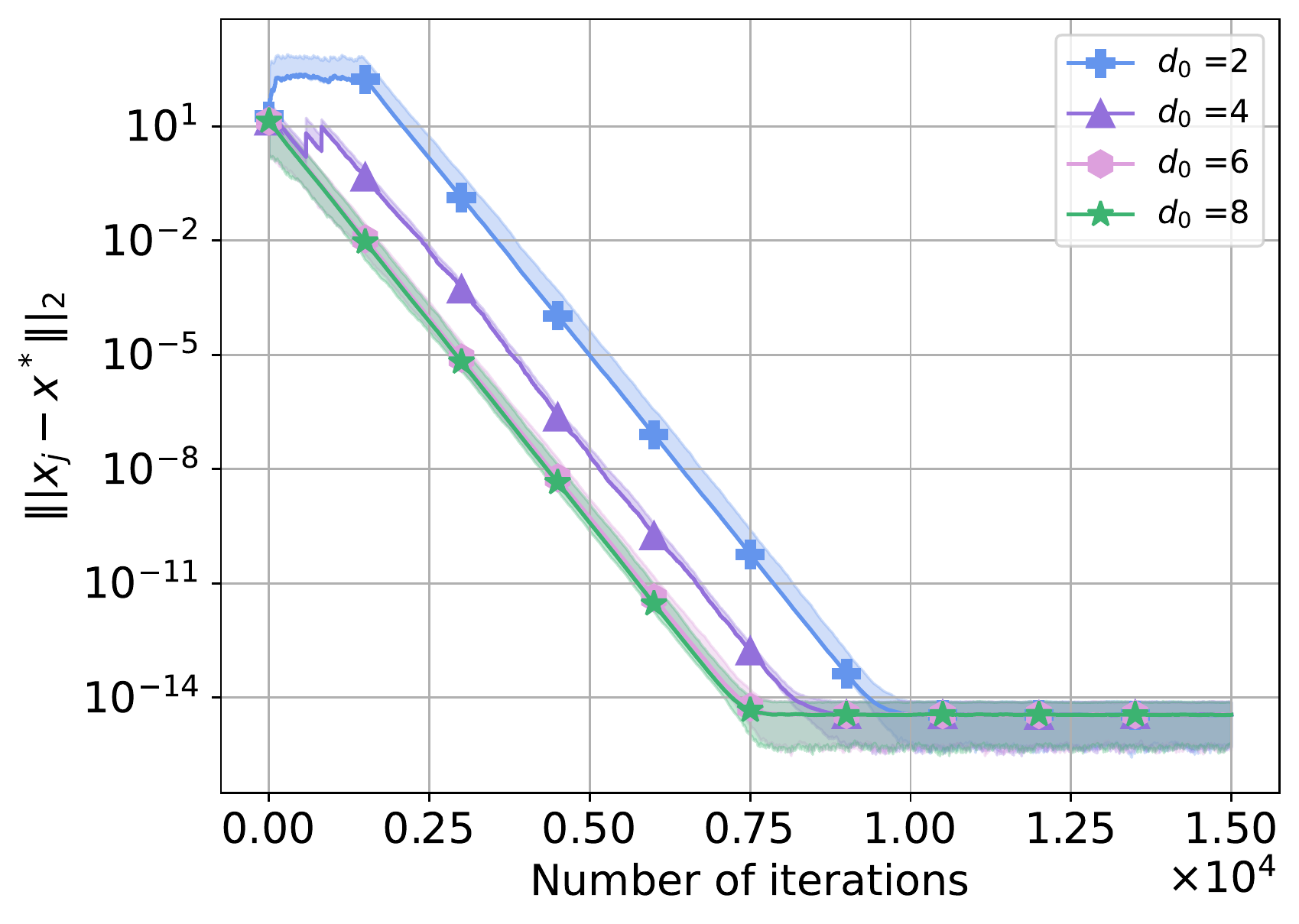}
         \caption{Error vs. $d_0$: $p=0.2$; using Alg.~\ref{alg:multi_block_list} with block-list.}
         \label{fig:varyds_0.2_wbl}
    \end{subfigure}
    \hspace{1em}
    \begin{subfigure}[t]{0.45\textwidth}
        \centering
         \includegraphics[width=\textwidth]{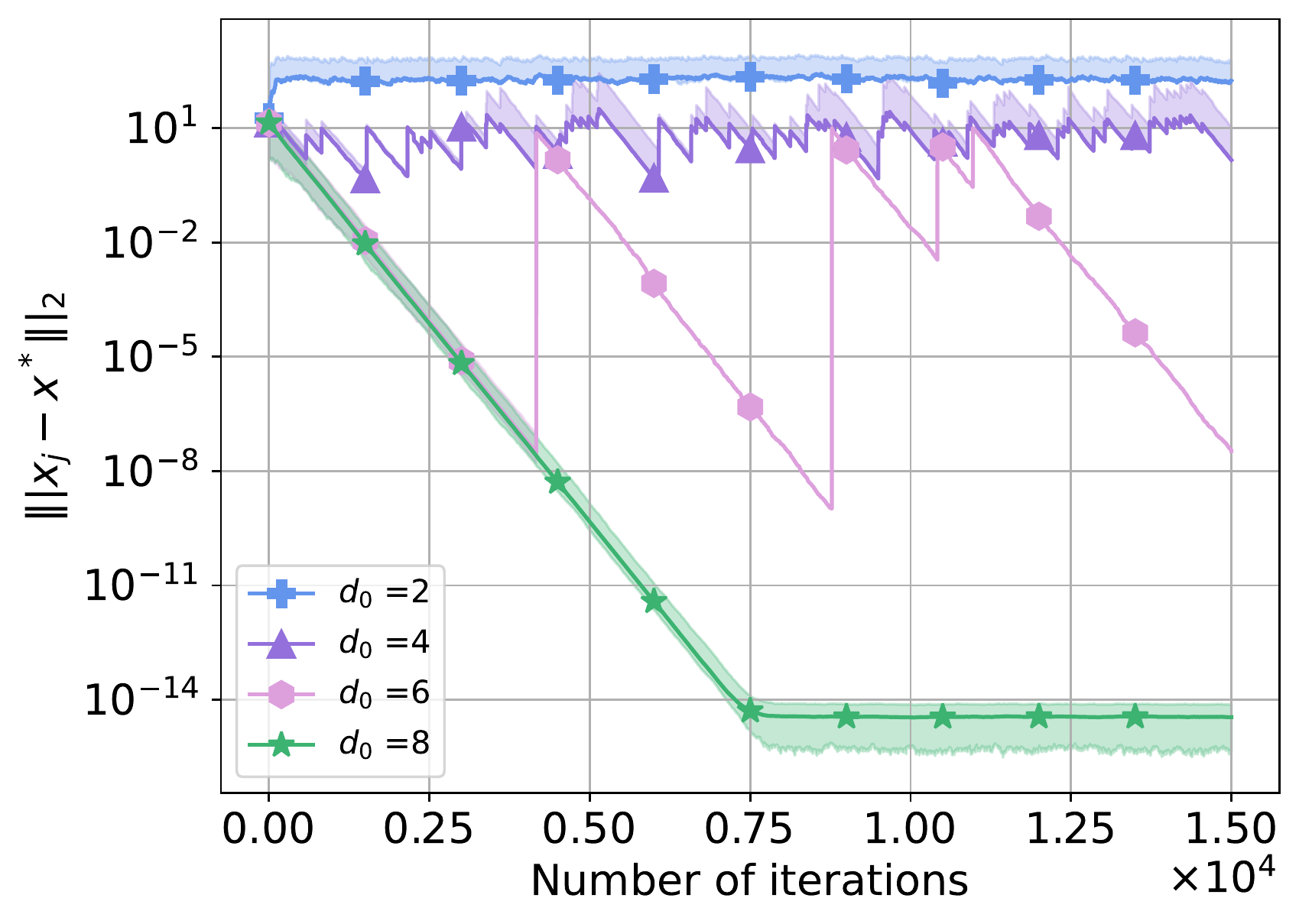}
         \caption{Error vs. $d_0$: $p=0.2$; using Alg.~\ref{alg:multi_block_list} without block-list.}
         \label{fig:varyds_0.2_wobl}
    \end{subfigure}\\
    \centering
    \vspace{1em}
    \begin{subfigure}[t]{0.45\textwidth}
        \centering
         \includegraphics[width=\textwidth]{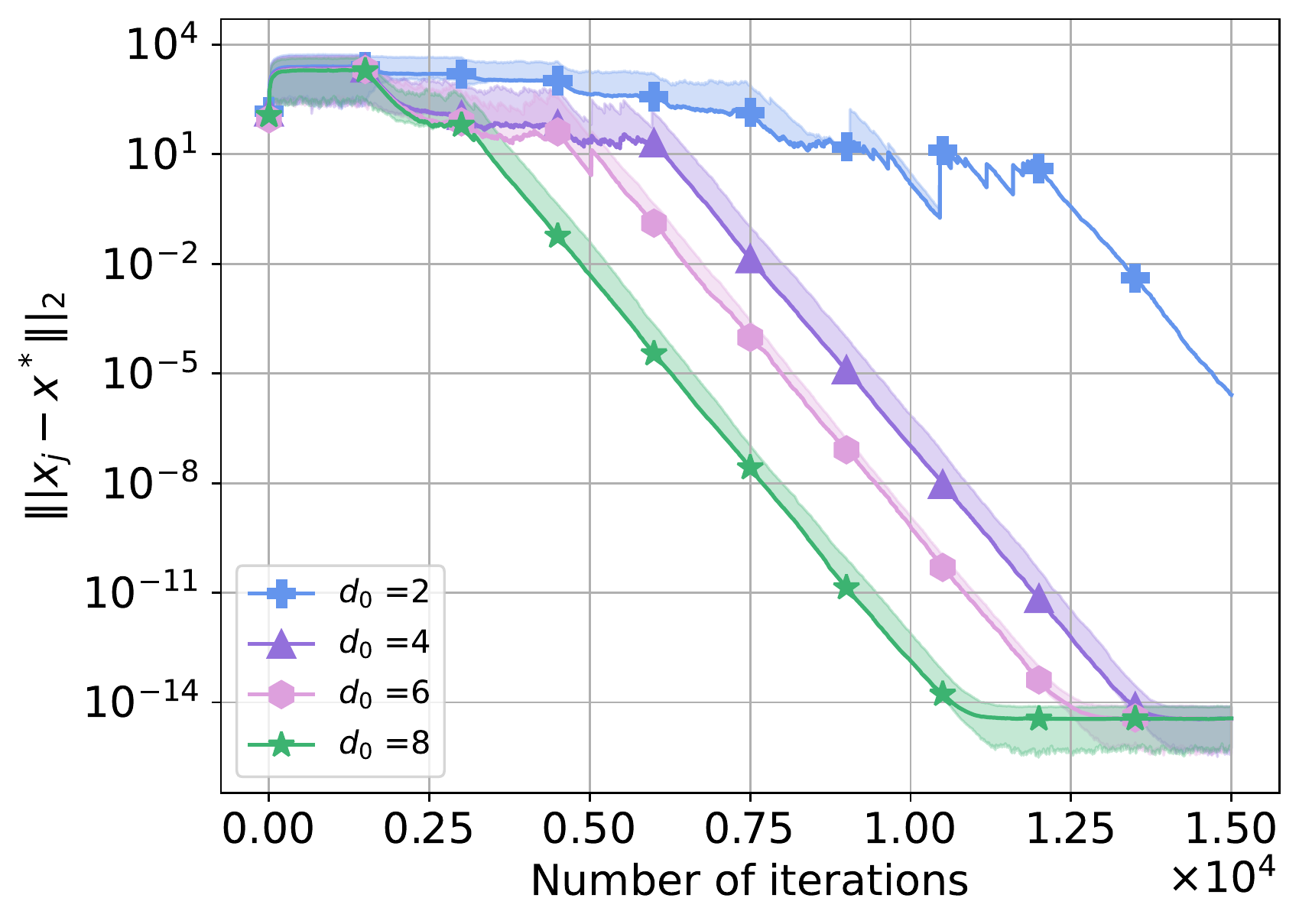}
         \caption{Error vs. $d_0$: $p=0.6$; using Alg.~\ref{alg:multi_block_list} with block-list.}
         \label{fig:varyds_0.6_wbl}
    \end{subfigure}
    \hspace{1em}
    \begin{subfigure}[t]{0.45\textwidth}
        \centering
         \includegraphics[width=\textwidth]{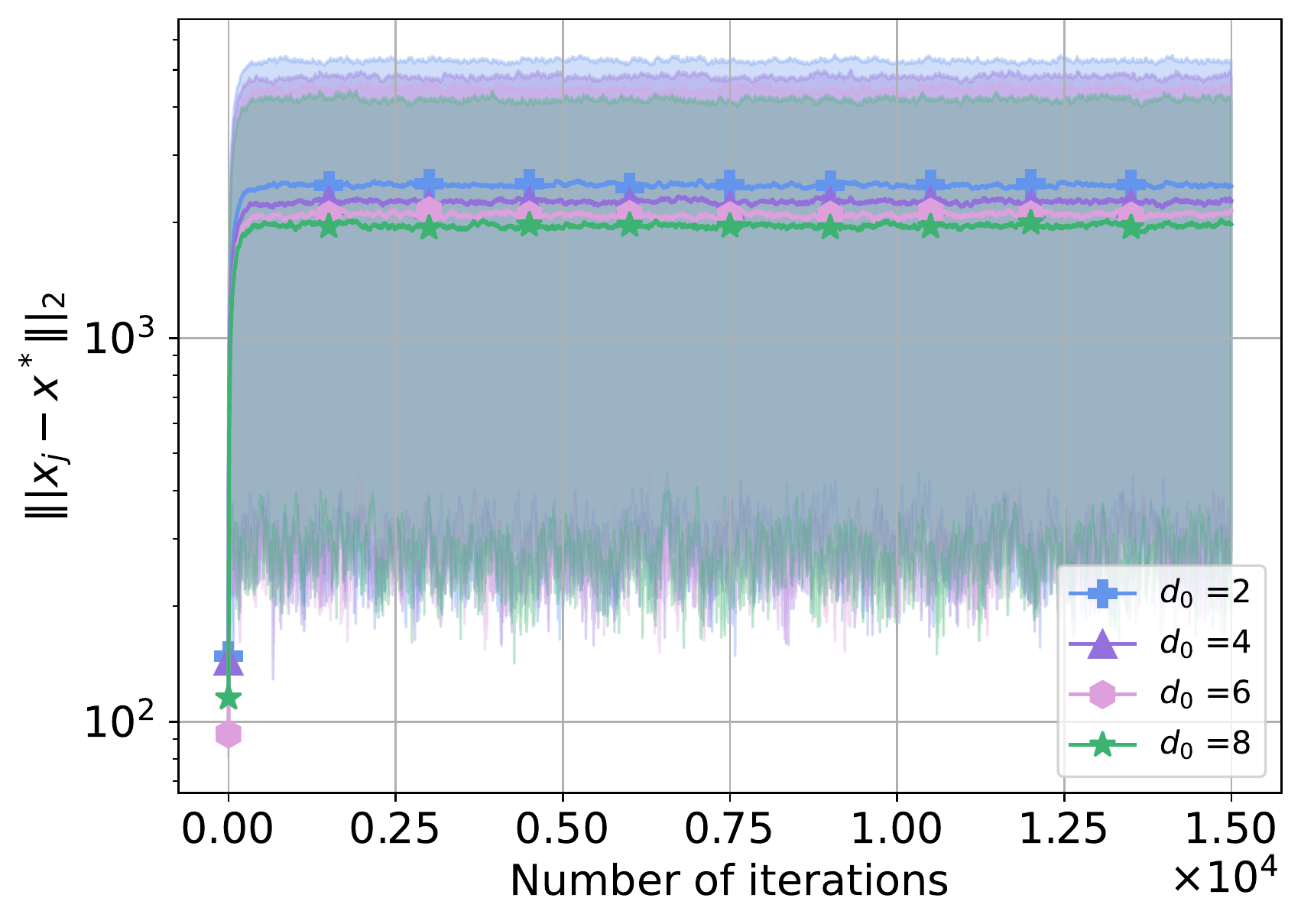}
         \caption{Error vs. $d_0$: $p=0.6$; using Alg.~\ref{alg:multi_block_list} without block-list.}
         \label{fig:varyds_0.6_wobl}
    \end{subfigure}
    \caption{Effects of different data sizes $d_0$ on convergence: $N_r=20, n_r=4, k=3$, and $\|e\|_{\infty} = 500$. The error norms were averaged over $50$ trials (the solid lines) with $90\%$ percentiles (the shaded areas).}
    \label{fig:varyds_bigerr}
\end{figure}
\begin{figure}[!h]
     \centering
      \begin{subfigure}[t]{0.45\textwidth}
        \centering
         \includegraphics[width=\textwidth]{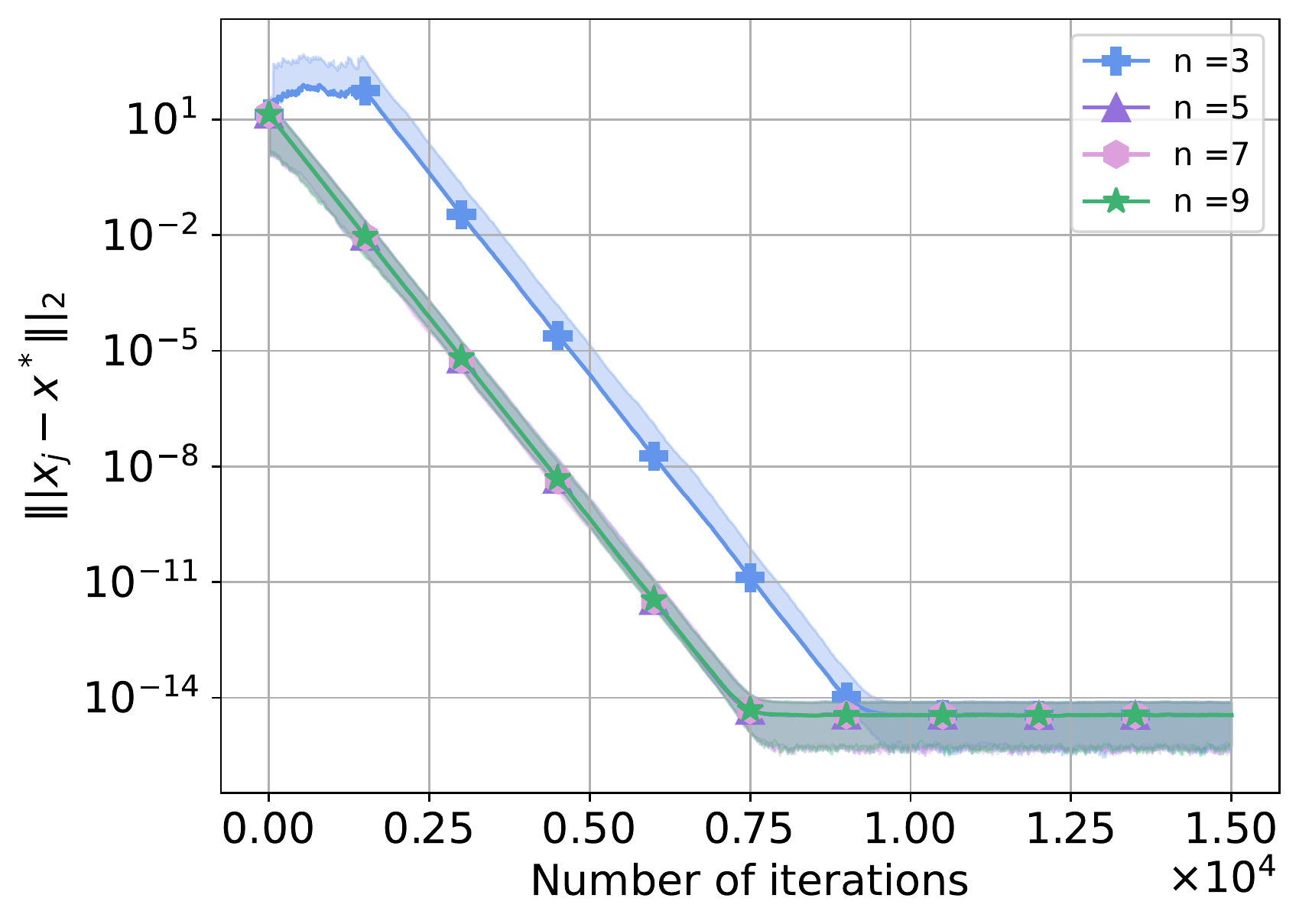}
         \caption{Error vs. $n$: $p=0.2$; using Alg.~\ref{alg:multi_block_list} with block-list.}
         \label{fig:varyn_0.2_w}
    \end{subfigure}
    \hspace{1em}
    \begin{subfigure}[t]{0.45\textwidth}
        \centering
         \includegraphics[width=\textwidth]{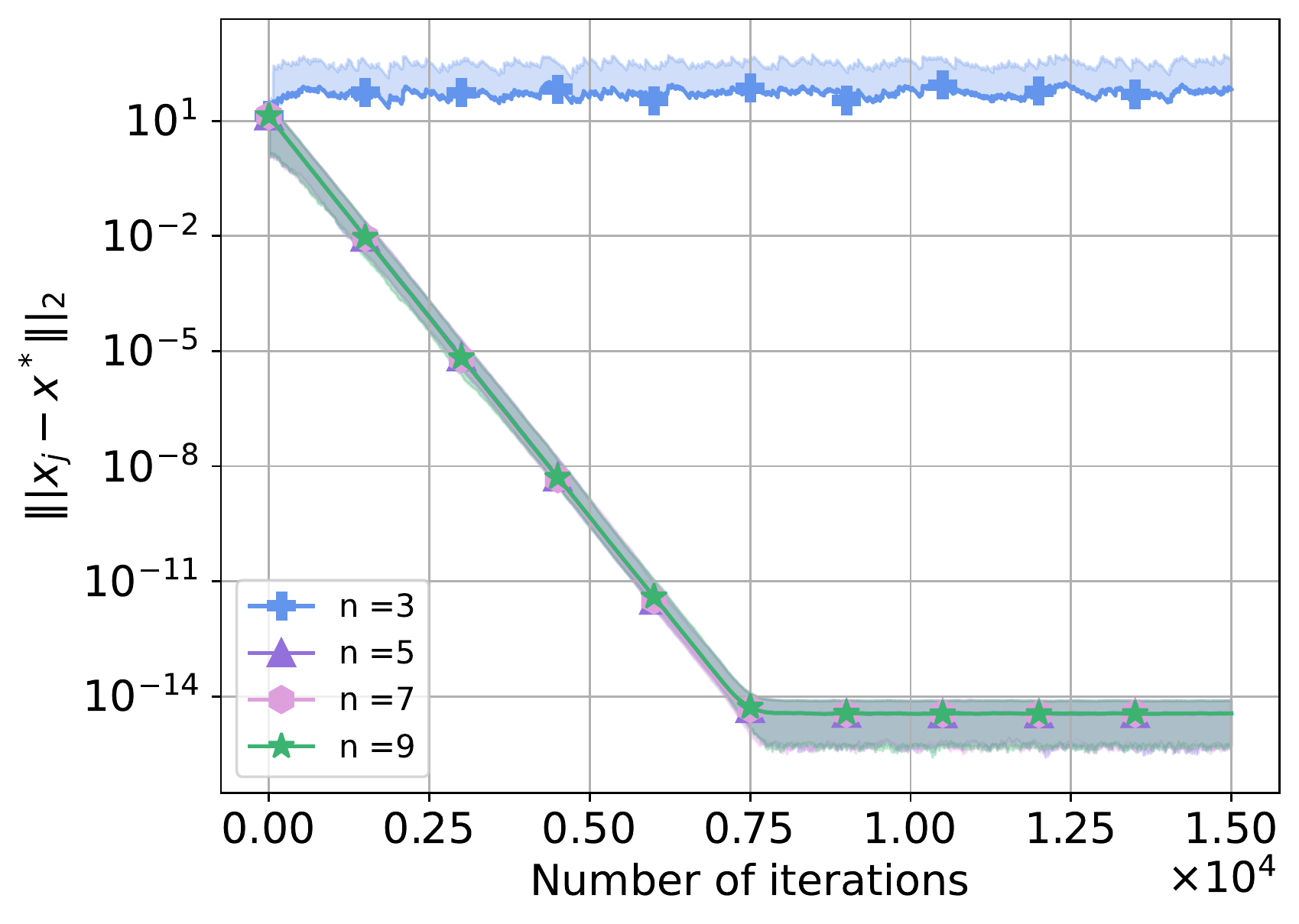}
         \caption{Error vs. $n$: $p=0.2$; using Alg.~\ref{alg:multi_block_list} without block-list.}
         \label{fig:varyn_0.2_wo}
    \end{subfigure}\\
      \hspace{1em}
      \begin{subfigure}[t]{0.45\textwidth}
        \centering
         \includegraphics[width=\textwidth]{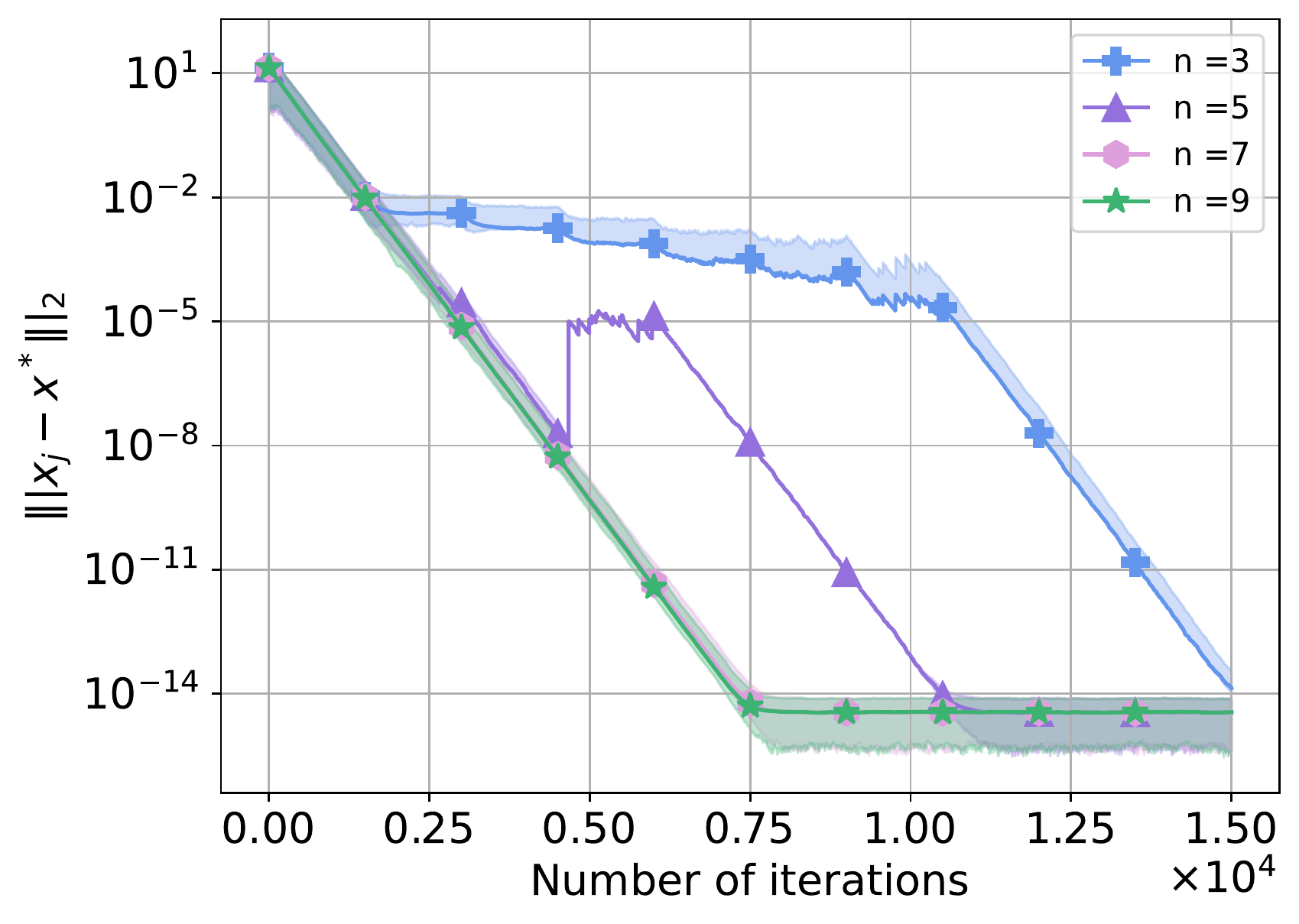}
         \caption{Error vs. $n$: $p=0.6$; using Alg.~\ref{alg:multi_block_list} with block-list.}
         \label{fig:varyn_0.6_w}
    \end{subfigure}
    \hspace{1em}
    \begin{subfigure}[t]{0.45\textwidth}
        \centering
         \includegraphics[width=\textwidth]{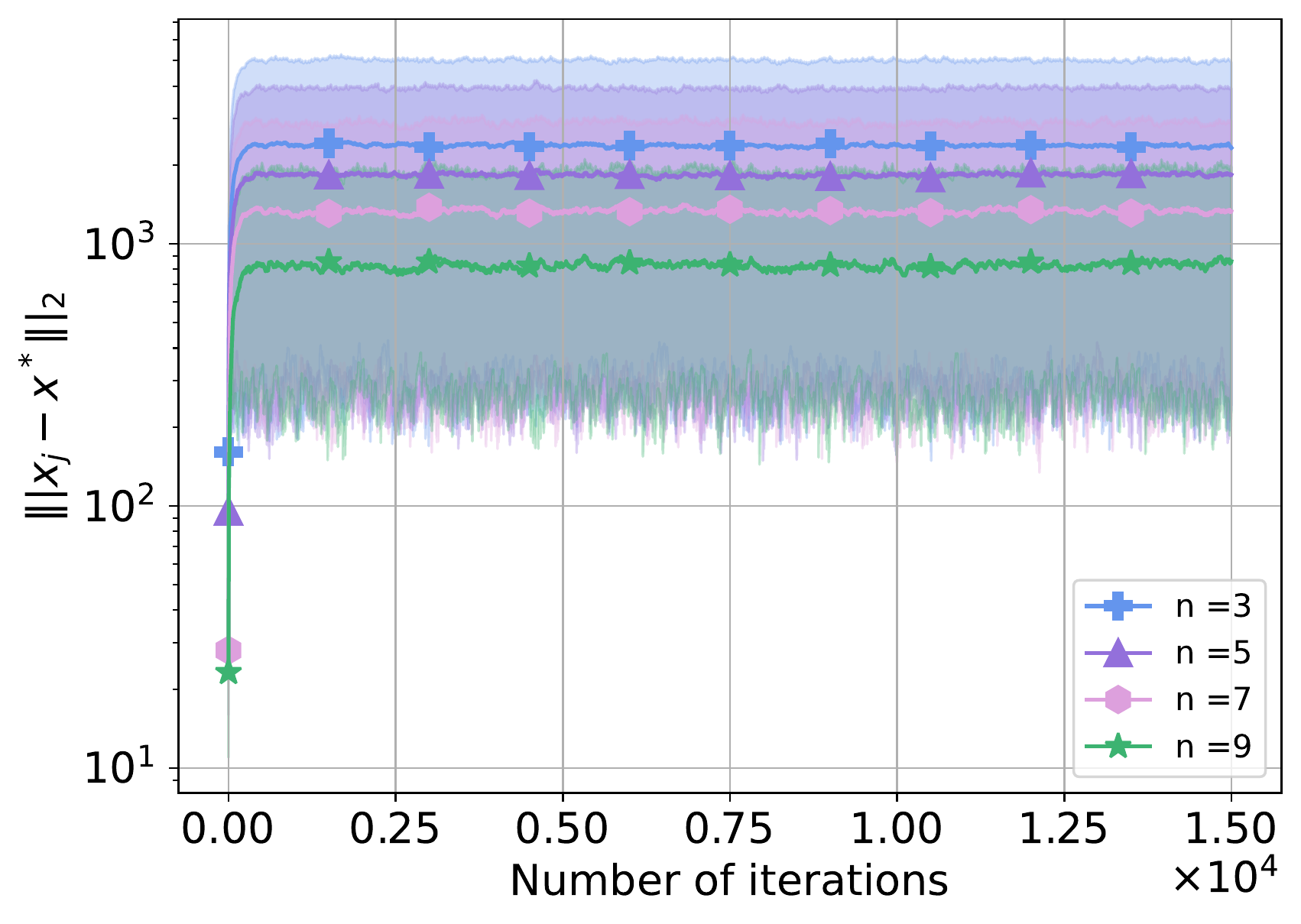}
         \caption{Error vs. $n$: $p=0.6$; using Alg.~\ref{alg:multi_block_list} without block-list.}
         \label{fig:varyn_0.6_wo}
    \end{subfigure}
    \caption{Effects of the number of used workers $n_r$ for row $r$ (note that $n_r \equiv n$ for all selected rows): $k=3$, $d_0=6$, $N_r=20$, $\|e\|_{\infty}=500$.}
    \label{fig:varyn}
     \vspace{-3mm}
\end{figure}

\begin{figure}
     \centering
      \begin{subfigure}[t]{0.45\textwidth}
        \centering
         \includegraphics[width=\textwidth]{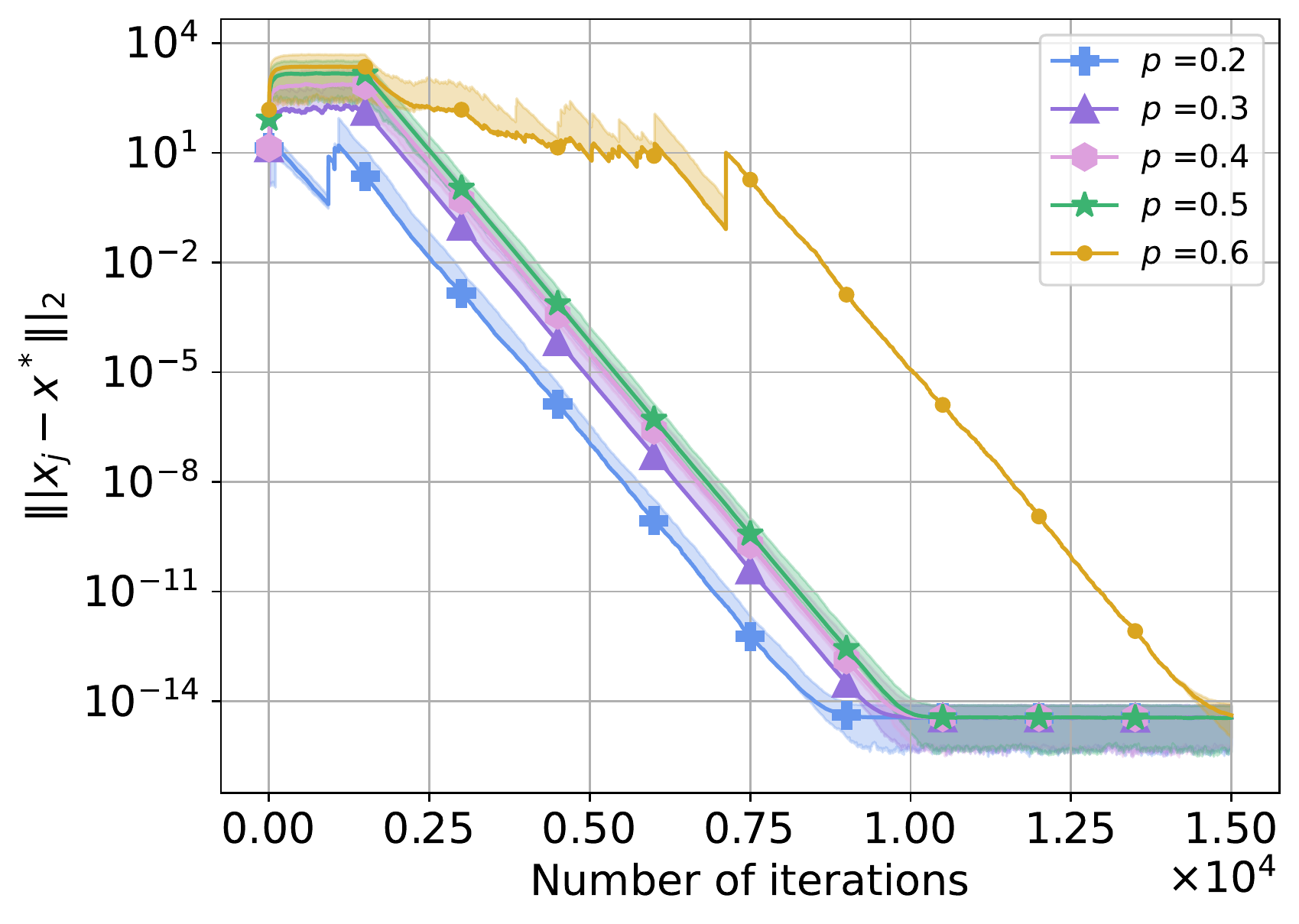}
         \caption{Error vs. $p$ :with block-list}
         \label{fig:varyp_w}
    \end{subfigure}
    \hspace{1em}
    \begin{subfigure}[t]{0.45\textwidth}
        \centering
         \includegraphics[width=\textwidth]{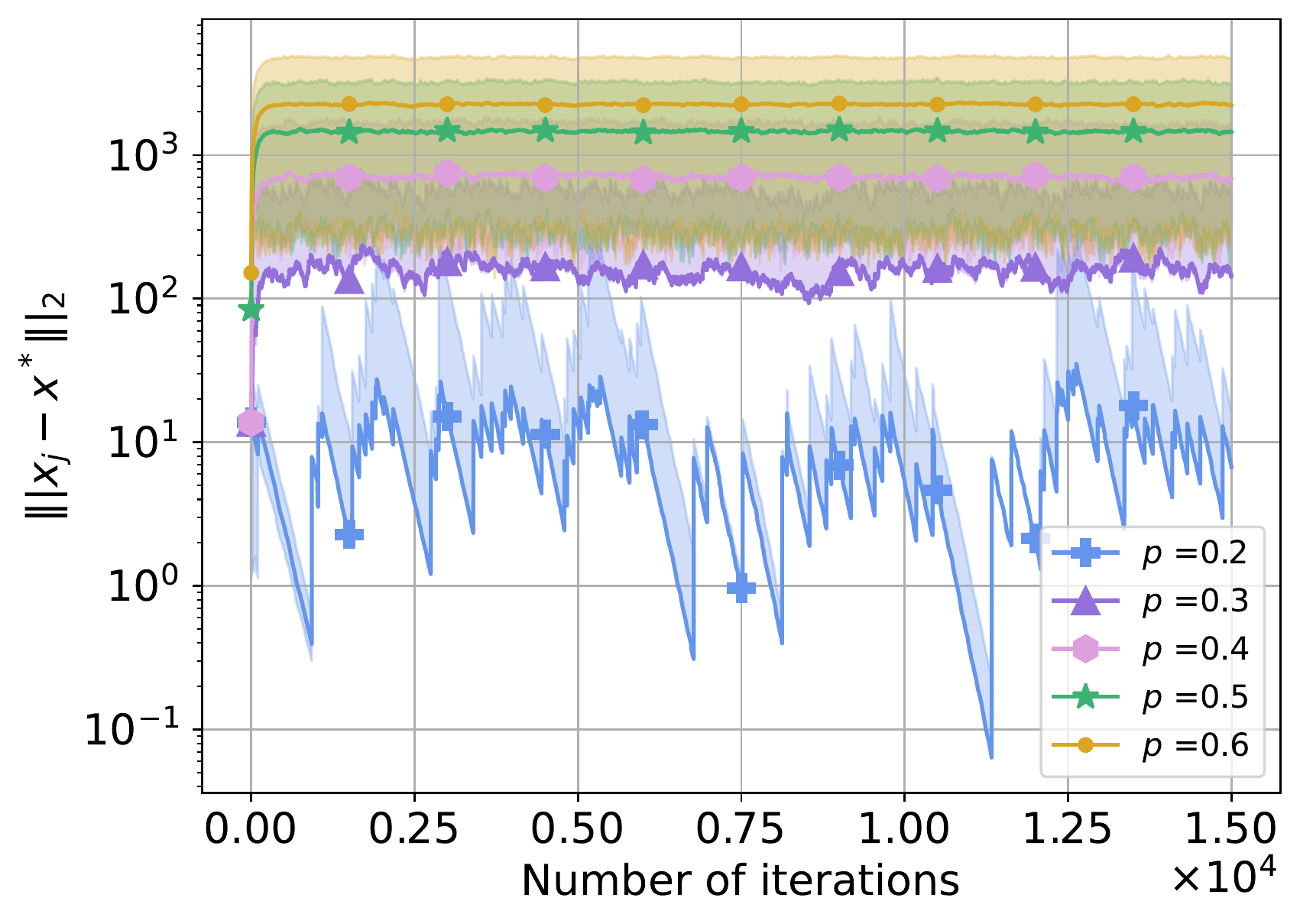}
         \caption{Error vs. $p$: without block-list}
         \label{fig:varyp_wo}
    \end{subfigure}
\caption{Effects of the adversarial rate $p$, $d_0=3, N_r=20, n_r=4$, $\|e\|_{\infty} = 5\times10^{2}$, and $k=3$. Squared error norms were averaged over $50$ trials with the $90\%$ percentiles}
    \label{fig:varyp}
     \vspace{-3mm}
\end{figure}

\begin{figure}
     \centering
      \begin{subfigure}[t]{0.45\textwidth}
        \centering
         \includegraphics[width=\textwidth]{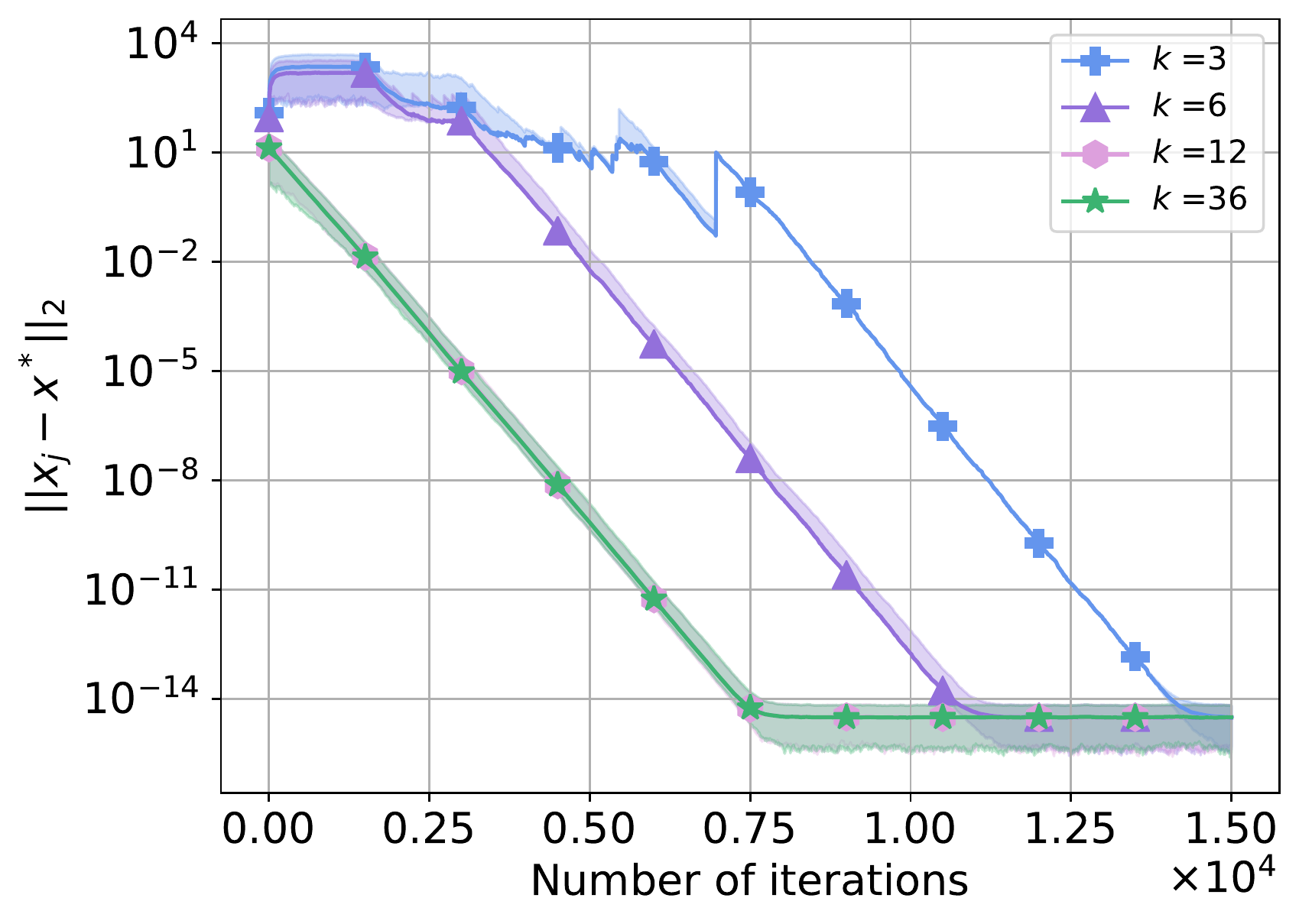}
         \caption{Error vs. $k$: $p=0.6$; using Alg.~\ref{alg:multi_block_list} with block-list.}
    \end{subfigure}
    \hspace{1em}
    \begin{subfigure}[t]{0.45\textwidth}
        \centering
         \includegraphics[width=\textwidth]{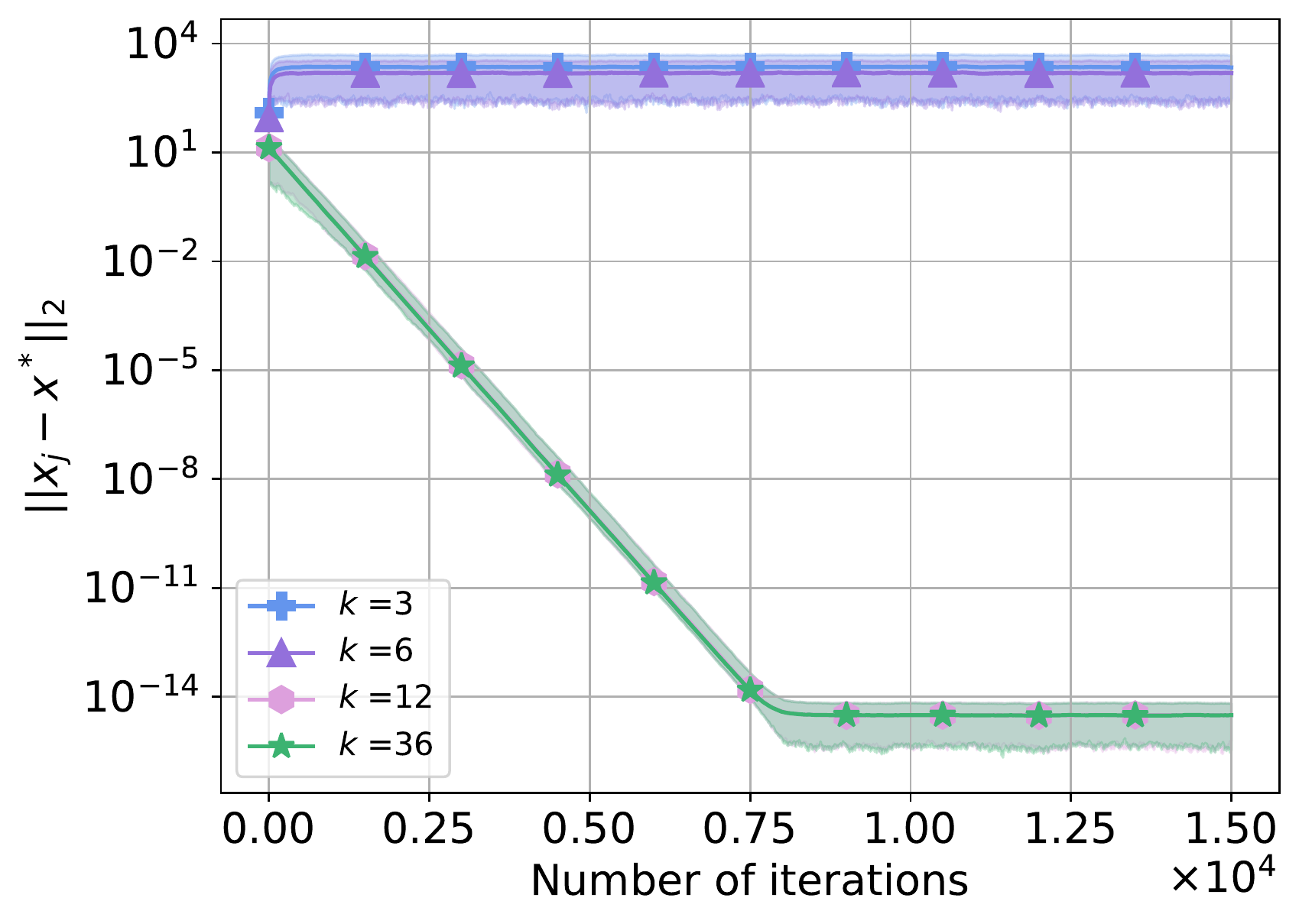}
         \caption{Error vs. $k$: $p=0.6$; using Alg.~\ref{alg:multi_block_list} without block-list.}
    \end{subfigure}
    \caption{Effects of the number of categories $k$. $N_r=20,n_r=4, d_0 = 4, \|e\|_{\infty}=5\times 10^2$}
    \label{fig:varyk}
     \vspace{-3mm}
\end{figure}

Figs.~\ref{fig:varyds_smallerr} and \ref{fig:varyds_bigerr} illustrate the impact of the number of used rows $d_0$ on the convergence results of our distributed RK method with and without the block-list. In this example, increasing the number of used rows $d_0$ from $2$ to $4$ improves the convergence rate for both with and without block-list scenarios, regardless of the magnitude of the adversaries $e$.
Fig.~\ref{fig:varyds_smallerr} demonstrates the convergence results when   $\|e\|_{\infty} = 10^{-3}$. From the figures, Alg.~\ref{alg:multi_block_list}  with the block-list shows fast convergence over all choices of $d_0$ when the adversarial rate $p=0.2$ (Fig.~\ref{fig:varyds_0.2_wbl_small}); the larger the number of used rows $d_0$, the faster the convergence when the adversarial rate $p=0.6$ (Fig.~\ref{fig:varyds_0.6_wbl_small}).
Fig.~\ref{fig:varyds_0.2_wob_small} reveals that, when $d_0=2,4$, the central worker may use a corrupted step-size, resulting in oscillations around the solution and the error converging to a range of magnitude between $10^{-3}$ and $10^{-5}$. On the other hand, when $d_0=6,8$, the error goes to 0 after 7500 iterations. 
As seen in Fig.~\ref{fig:varyds_0.6_wobl_small}, the convergence errors of all $d_0$ values lie in the range of $(10^{-4} , 10^{-2})$. With a small $\|e\|_{\infty}$, the method without the block-list can converge by increasing $d_0$. However, when the magnitude of the adversaries and the adversarial rate are large (as shown in Fig.~\ref{fig:varyds_0.6_wobl}), the convergence is not guaranteed without a block-list. In comparison, the method with the block-list converges quickly to an accuracy of $10^{-14}$ when  $d_0$ is sufficiently large (as seen in Fig.~\ref{fig:varyds_0.2_wbl}). This highlights the importance of using the block-list in an environment with larger outliers. 

Figure~\ref{fig:varyn} examines the impact of the number of chosen workers $n_r$ on the convergence, with adversary rates of $0.2$ and $0.6$. As the number of workers increases from $3$ to $7$, the convergence becomes faster in both with and without block-list   methods. However, the method using the block-list generally provides better convergence compared to the method without  block-list. The block-list method requires extra storage, but it is worth the trade-off in terms of improved convergence. Without the block-list, oscillations can be observed in the convergence when $p=0.2$ and $n_r \equiv n=3$ and when $p=0.6$ and all $n_r$.

Fig.~\ref{fig:varyp} demonstrates the effect of the adversary rate on the convergence. As the adversary rate $p$ increases, the accuracy decreases. Even though the adversary rate is large,  the final results using the block-list method are still satisfying. Without the block-list, when the adversarial rate $p >0.5$, the central worker fails to approach the true solution due to the adversarial workers. This again shows the importance and effectiveness of using the block-list, especially in a highly hostile environment with a higher adversarial rate and a higher magnitude of the adversary. In addition, Fig.~\ref{fig:varyk} shows the effect of the number $k$ of error categories    with the block-list. One can see that  our method converges when $k$ is big enough for the adversarial rate being  $0.2$ and $0.6$.
In Fig.~\ref{fig:wbc_varyds}, we use the Wisconsin (Diagnostic) Breast Cancer data set, which includes data points whose features are computed from a digitized image of a fine needle aspirate (FNA) of a breast mass and describe characteristics of the cell nuclei present in the image (see \cite{wcb_uci} for more details). Similar to the setup in \cite{haddock2020quantile},  we set the simulations in the following way: the collection of data points forms matrix $A \in \mathbb{R}^{569\times 10}$. We then normalize $A$ and construct $x$ and $b$ using a Gaussian distribution to form a consistent system. The convergence results in Fig.~\ref{fig:wbc_varyds} show the effectiveness of our method solving this linear systems in a relatively safer environment with an adversarial rate $p=0.3$ (Fig.~\ref{fig:wbc_varyds_0.3}) and a more hostile environment with an adversarial rate $p=0.6$ (Fig.~\ref{fig:wbc_varyds_0.6}). When $p=0.3$, the method converges within $1000$ iterations, and as $d_0$ increases, the convergence speed becomes faster. Meanwhile, when $p=0.3$, the method converges within $1500$ iterations, and $d_0=8$ has the fastest convergence speed among all choices of $d_0$.

Finally, we have investigated the impact of updating cycles $S$ on the accuracy of block-list recognition. 
 In Table~\ref{tab:diffS}, we calculate the accuracy of the block-list method when $S = 200, 500, 1000, 2000$ using the same matrix $A$ as in Fig. \ref{fig:varyds_smallerr}. The two examples in the table show that as $S$ increases, the accuracy is higher. 
  
\begin{figure}[!h]
     \centering
      \begin{subfigure}[t]{0.45\textwidth}
        \centering
         \includegraphics[width=\textwidth]{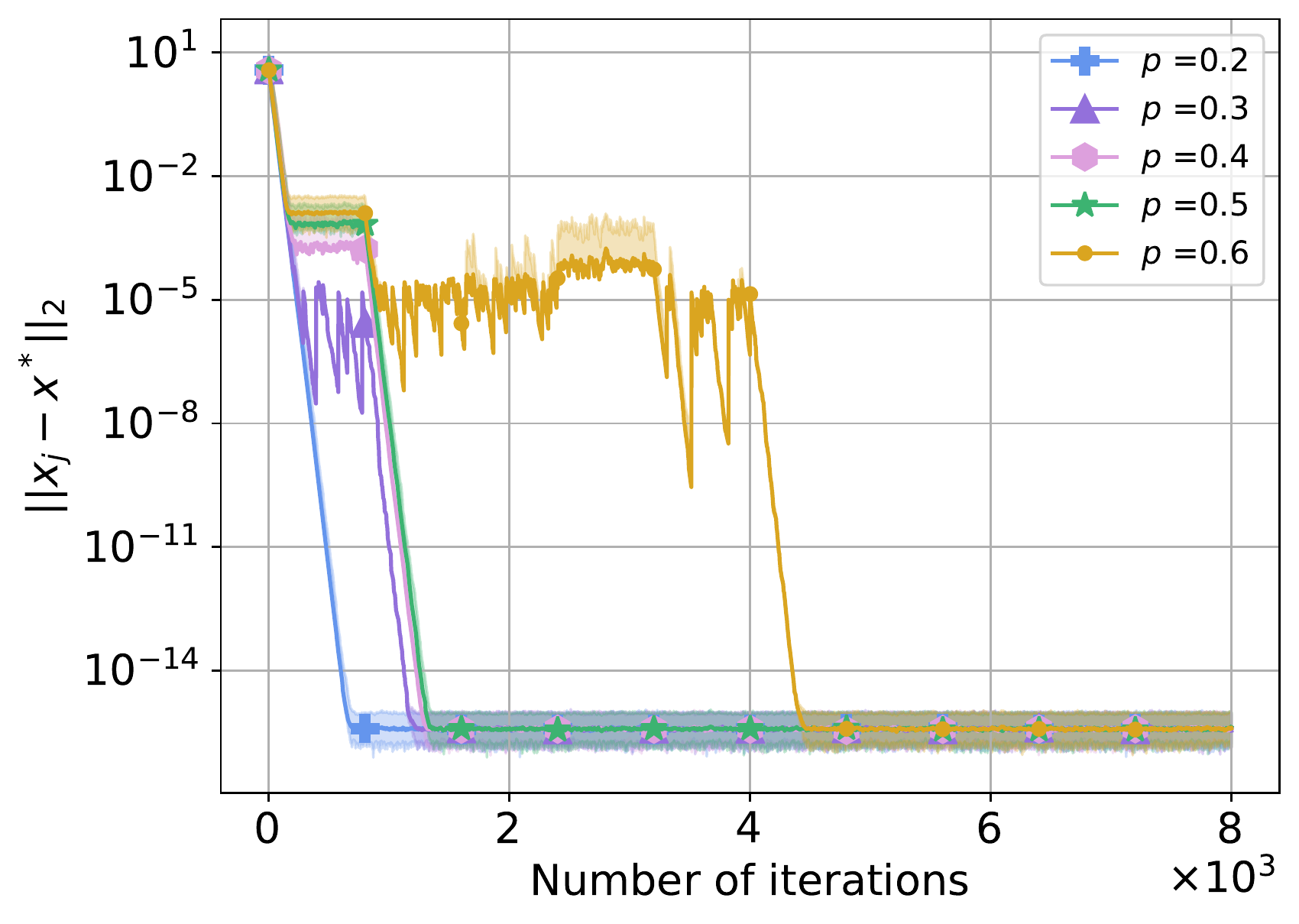}
         \caption{Error vs. $p$, with block-list.}
         \label{fig:wbc_varyds_0.3}
    \end{subfigure}
    \hspace{1em}
    \begin{subfigure}[t]{0.45\textwidth}
        \centering
         \includegraphics[width=\textwidth]{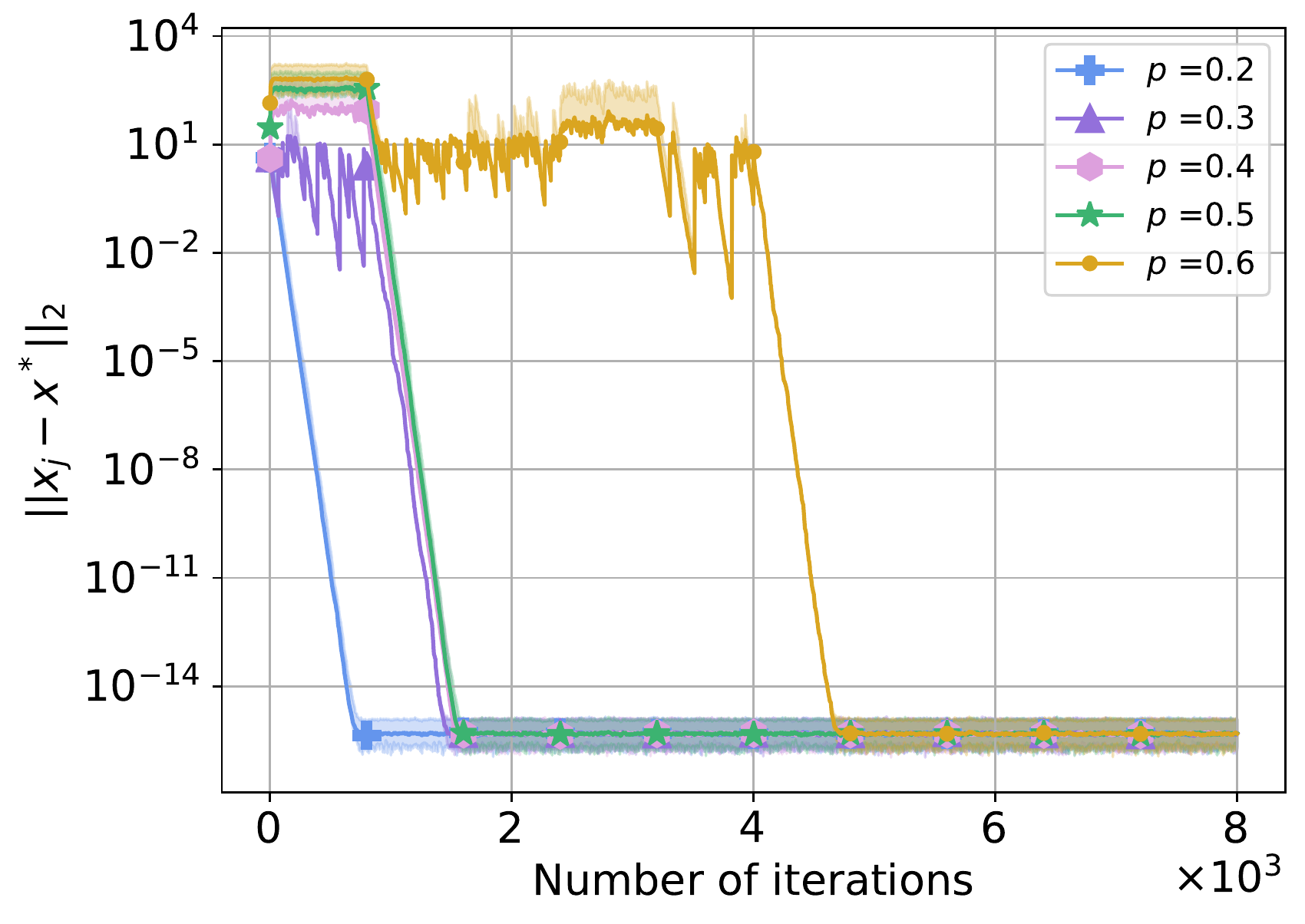}
         \caption{Error vs. $p$, without block-list.}
         \label{fig:wbc_varyds_0.6}
    \end{subfigure}
\caption{Effects of adversarial rate $p$ using the Breast Cancer Wisconsin data set, $N_r = 10, n_r=4, k=3$, $\|e\|_{\infty} = 500$.}
    \label{fig:wbc_varyds}
     \vspace{-3mm}
\end{figure}

\begin{table}[h]
    \centering
    \caption{The accuracy of recognizing the block-list with different updating cycles $S$ by fixing $k=3,N_r=20$, and $n_r=4$.}
    \begin{tabular}{||c|c|c|c|c||}
    \hline
        $S$ & $200$ & $500$ &$1000$ &$2000$ \\
        \hline
         $p=0.6,d_0=8$ &$0.75$ &$0.792$ &$0.875$ &$0.875$\\
         \hline
         $p=0.4,d_0=6$ &$0.75$ &$0.9375$ &$1$ &$1$\\
         \hline
    \end{tabular}
    \label{tab:diffS}
\end{table}
\section{Conclusion and future work}
It is of great significance for optimization algorithms to be robust and resistant to adversaries. 
In this work, we propose efficient algorithms based on the \textbf{mode} for solving large-scale linear systems in the presence of the adversarial workers. This kind of adversary has plenty of applications in the real world, e.g.  Internet of Things (IoT). We provide theoretical convergence guarantee and the theories are supported by our experiments. 
The methods are capable of handling various levels of adversarial rates. In particular, the method with the block-list is able to provide accurate estimation of solution when the adversarial rate $p > 0.5$, and at the same time, our method can identify the adversarial workers. Our experiments also highlight the impact of several important parameters of the adversaries and of anti-adversary strategies, namely, the involved row number  $d_0$ to update the solution per iteration, the number of error categories $k$, the adversary rate $p$, and the number of chosen workers $n_r$ at each iteration. 

Our method can also be adapted to solve the nonlinear problem. In Fig.~\ref{fig:al1}, we applied the method with the block-list to solve the optimization problem with $\ell_1$ regularization $\frac{1}{2}\|Ax-b\|_2^2 + \gamma\|x\|_{1}$ with $\gamma = 1$.  In the distributed setting, the problem can be formulated  as 
$\sum_{i=1}^{d_1}f_i(x)$, where $f_i(x) = \frac{1}{2}(A_ix_j - b_i)^2+\frac{\gamma}{d_1}\|x\|_1$.  Fig.~\ref{fig:al1_varyds} shows the distance between the solution using our method and the solution using the LASSO solver from the Python package scikit-learn \cite{scikit-learn} when choosing different number of rows $d_0$.  When the number of used rows $d_0 = 4$, it takes the least iterations to converge. All choices of $d_0$ end up with an relative error around $10^{-5}$ with the block-list. The object function $F(x)$ v.s. the iterations are shown in Fig.~\ref{fig:al1_obj}. 

\begin{figure}[t]
    \centering
    \begin{subfigure}{0.45\textwidth}
        \centering
        \includegraphics[width=\textwidth]{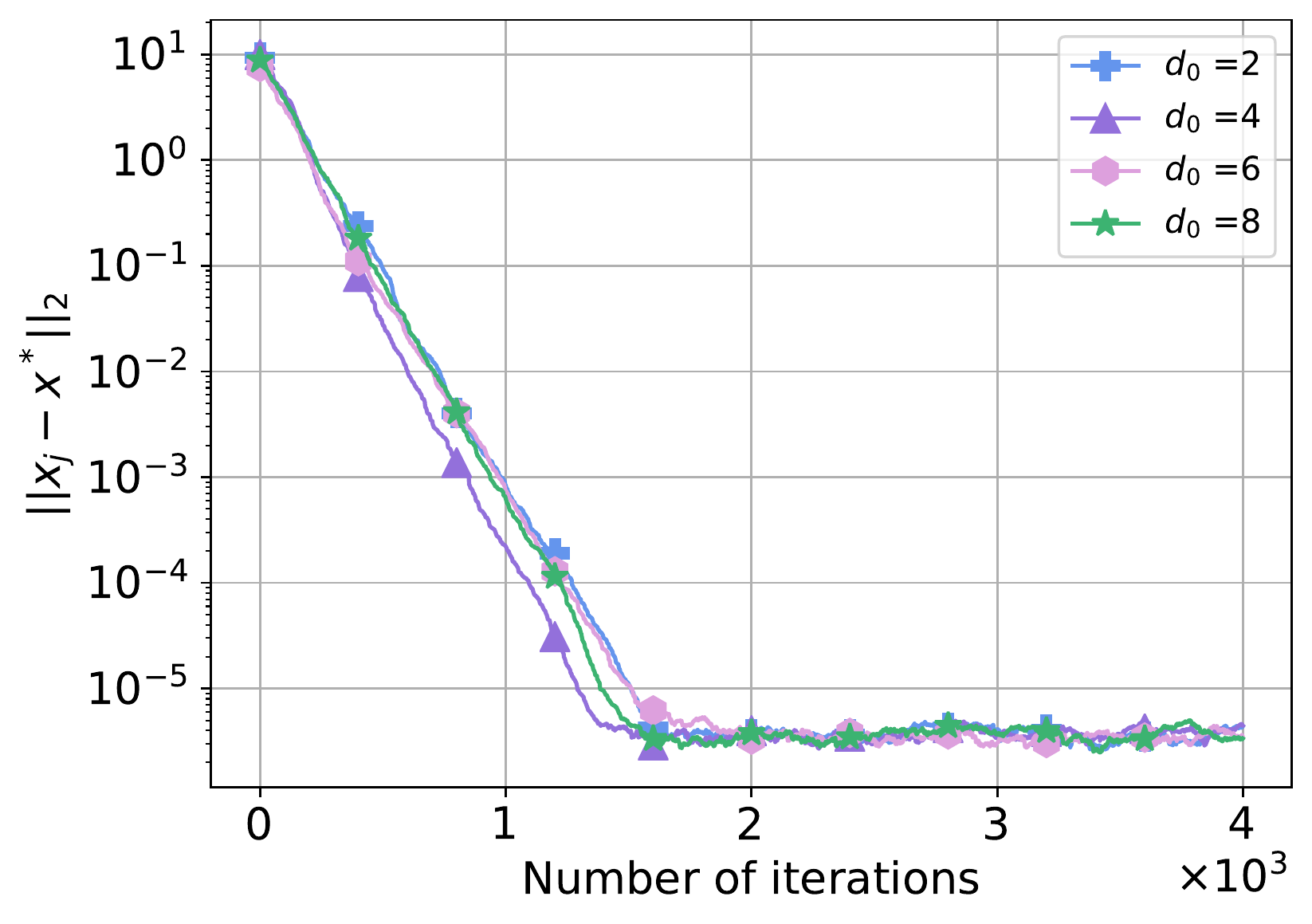}
     \caption{Error vs. number of used rows $d_0$: without block-list.}
     \label{fig:al1_varyds}
     \end{subfigure}
     \begin{subfigure}{0.45\textwidth}
        \centering
        \includegraphics[width=\textwidth]{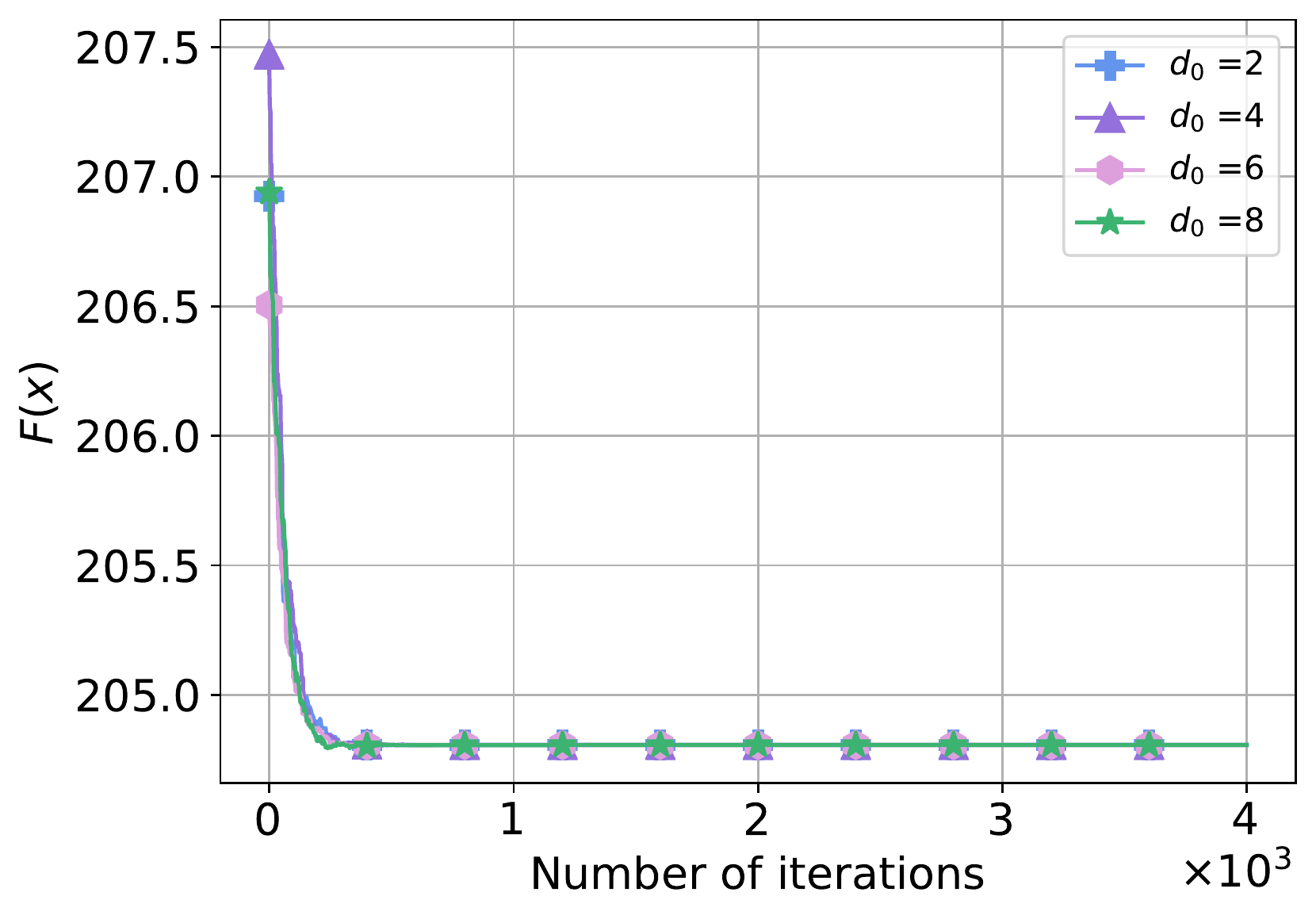}
     \caption{$F(x)$ vs. number of used rows $d_0$: without block-list.}
     \label{fig:al1_obj}
     \end{subfigure}
     \caption{Effects of the number of used rows $d_0$ on convergence: $ p=0.4$, $ \|e\|_{\infty} = 1$, $N_r = 20$, $n_r = 15$, and $\gamma=1$. The object function $F(x) = \sum_{i=1}^{d_1} \left(\frac{1}{2}(A_ix_j - b_i)^2 + \frac{\gamma}{d_1} \|x\|_1 \right)$.}
    \label{fig:al1}
\end{figure}

 \begin{figure}[t]
    \centering
        \includegraphics[width=0.45\textwidth]{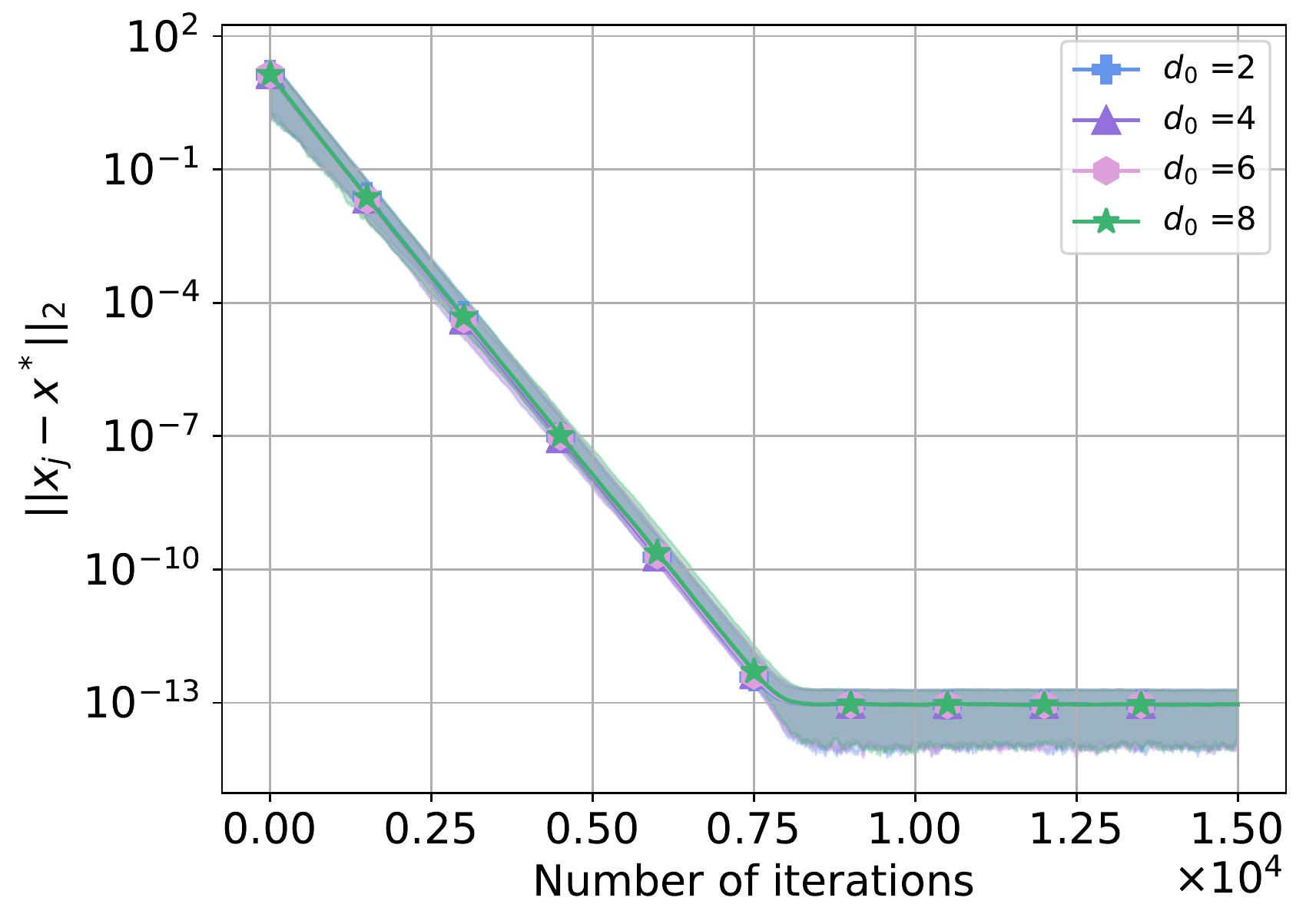}
     \caption{Effects of the number of used rows $d_0$ on convergence: $ p=0.6$, $ \|e\|_{\infty} = 10^{-3}$, $N_r = 20$, $n_r = 4$, $r = 3$, and $\alpha=0.7$ using RrDR method.}
    \label{fig:rrdr}
\end{figure}
Additionally, we provide convergence analysis for methods when multiple rows are selected uniformly at each iteration.  We also provide the proof of a more general sampling scheme where the row is sampled according to its squared Euclidean norm, although the proof only applies to the case where only a single row is used for computation at each iteration. 

Interesting avenues for future work include proving convergence under different sampling rules and rigorously generalizing this method to non-linear optimization problems. Another direction is to compare other iterative methods in the distributed and adversarial setting. For example, randomized r-sets-Douglas-Rachford (RrDr) \cite{han2022randomized} is a widely used method for solving linear systems of equations. This method usually shows a fast convergence compared to RK. The performance of the method is also tested in the adversarial distributed setting and the results are reported in Fig.~\ref{fig:rrdr}. Both methods start to converge after about 8000 iterations. However, for all $d_0$, using RrDR converges as fast as when $d_0 = 8$ using RK, see Fig.~\ref{fig:varyds_0.6_wbl_small}.
 
\section*{Acknowledgements} Authors are listed in alphabetical order. Some of the work for this article was done while  Longxiu Huang was Assistant Adjunct Professor and Xia Li  was a graduate student at UCLA.

\bibliographystyle{siamplain}
\bibliography{refer}

 \appendix
\section{Single row convergence without block-list}\label{appendix:single_row}
In this section, we present an algorithm and its corresponding theory for a specific case in which only one row ($d_0=1$) is utilized for updating the solution at each iteration. Therefore, $N_r \equiv N$.  The algorithm is based on the assumption that the probability of selecting row $i \in [d_1]$ for the updating process is proportional to the squared length of the corresponding row. Further details can be found in Algorithm~\ref{alg:single_row}. We will then proceed to analyze the convergence of Algorithm~\ref{alg:single_row}.

\begin{algorithm}[h]
\caption{\textsc{Distributed Randomized Kaczmarz with block-list}}\label{alg:single_row}
\begin{algorithmic}[1]
    \STATE \textbf{Input}: Initialize \textbf{block-list $B$}, reliable worker set $D = [N]$,  $\text{MaxIter}$, $\text{Tol}$, Blocklist\_flag, checking period $T$.
    \STATE Initialize $c_s = 2 \times\text{Tol}$, a counter vector $E = 0\in \mathbb{R}^{N}$ 
    \IF{Blocklist\_flag}
      \STATE Initialize \textbf{block-list $B$}  
    \ENDIF
    \WHILE{$j < \text{MaxIter}$ and $|c_s| > \text{Tol}$, }
    \STATE The central worker $w_c$ selects a row index $i_j \in [d_1]$ with probability $p_{i_j} = \frac{\|A_{i_j}\|_2^2}{\|A\|_F^2}$
    \STATE Sample $w_1,\ldots,w_n$ uniformly from $D$
    \STATE Broadcast $A_{i_j}$ to $w_1,\ldots,w_n$ 
    \STATE  
    $w_s$ returns $c_s = \frac{\langle A_{i_j}^T,x_i\rangle-(b_{i_j}+ e_\ell(i_j))}{\|A_{i_j}\|^2} $, if  $w_s\in C_l$  
    \STATE $w_c$ splits  $\{c_s\}_{s = 1}^{n}$ into groups $G_1,\ldots,G_k$   and randomly choose from groups $G_{s}$ that satisfy $|G_{s}| \geq n(1-p)$\label{alg:line:7}

  \STATE Update $x^{j+1} = x^{j} + c_{s_{0}}A_{i_j}^\top$
  \STATE Update $E(s) = E(s) + 1,$ if $c_s\notin G_{s_0}$ 
  \label{alg:1row-line:9}
  \IF {mod$(j,T) = 0$}\label{alg:1-rowline:10}
        \STATE Update $B$ by checking the value of entries in $E$\label{alg:line:11}
        \STATE $D = D\setminus B$
  \ENDIF
  \STATE Update $j=j+1$
    \ENDWHILE
    \STATE \textbf{Output}: $x^j$ and $B$

    \end{algorithmic}
\end{algorithm}

\begin{theorem}\label{thm:main_d0=1}
Let $A\in\mathbb{R}^{d_1\times d_2}$ with $d_1\geq d_2$ and $b,e_1,\ldots,e_{k}\in\mathbb{R}^{d_1}$. Assume that we solve $Ax^*=b$ via Algorithm~\ref{alg:single_row}, then 
\begin{equation}
\mathbb{E}\|x_{i}-x^*\|_2^2
      \leq\alpha^{i+1}\|x_0-x^*\|_2^2+ \frac{1-\alpha^{i+1}}{1-\alpha}\frac{1}{\|A\|_F^2}\sum_{\ell=1}^{k}q_\ell\|e_\ell\|^2,
    \label{eqn:convg}
\end{equation}
where $\alpha=1-\frac{\sigma_{\min}^2(A)}{\|A\|_F^2}$,  $q_{\ell}=\frac{\hat{q}^{\ell}_{\text{mode}}}{q}$ and  $\sigma_{\min}(A)$ is the smallest singular value of $A$. 

Additionally, if $\|e_{\ell}\|\leq C$, we have 
\begin{equation}\label{eqn:thm1-2}
\mathbb{E}\|x_{i}-x^*\|_2^2\leq\alpha^{i+1}\|x_0-x^*\|_2^2+\frac{1-\alpha^{i+1}}{1-\alpha}\frac{Cq_0}{\|A\|_F^2}.
\end{equation}
\end{theorem}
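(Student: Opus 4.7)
The plan is to mirror the decomposition strategy used in the proof of Theorem~\ref{thm:multiple-main}, specialized to the $d_0=1$ setting with row sampling proportional to $\|A_{i_j}\|_2^2/\|A\|_F^2$ rather than uniform. First I would expand the squared error after one Kaczmarz step. Using $A_{i_j} x^* = b_{i_j}$, the cross term vanishes exactly as in the body of the paper, giving
\begin{equation*}
\|x_i - x^*\|_2^2 \;=\; \|x_{i-1}-x^*\|_2^2 \;-\; \frac{\langle A_{i_j}^T, x_{i-1}-x^*\rangle^2}{\|A_{i_j}\|_2^2} \;+\; \frac{e_{i_j,\ell_{i_j}}^2}{\|A_{i_j}\|_2^2},
\end{equation*}
where $\ell_{i_j}$ denotes the category whose representatives form the mode at the current iteration (with the convention $e_{i_j,0}=0$).

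Next I would take conditional expectations in two layers. The row $i_j$ is drawn independently of the workers' outputs, so the iterated expectation factors cleanly: first over $\ell_{i_j}$ given $i_j$, then over $i_j$ under the norm-proportional distribution. For the contraction term, the $1/\|A_{i_j}\|_2^2$ cancels with the sampling weight to give the classical Strohmer–Vershynin bound
\begin{equation*}
\mathbb{E}_{i_j}\frac{\langle A_{i_j}^T,x_{i-1}-x^*\rangle^2}{\|A_{i_j}\|_2^2} \;=\; \frac{\|A(x_{i-1}-x^*)\|_2^2}{\|A\|_F^2} \;\geq\; \frac{\sigma_{\min}^2(A)}{\|A\|_F^2}\|x_{i-1}-x^*\|_2^2,
\end{equation*}
which produces the contraction factor $\alpha$. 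For the adversarial-error term, Lemma~\ref{lem: q} says that, conditioned on a mode existing, the probability that the mode is in category $\ell$ is $q_\ell = \hat{q}^{\ell}_{\text{mode}}/q$; taking expectation over $\ell_{i_j}$ yields $\sum_{\ell=0}^{k} q_\ell\, e_{i_j,\ell}^2$ inside the fraction, and the $\|A_{i_j}\|_2^2$ weight again cancels, producing
\begin{equation*}
\mathbb{E}\frac{e_{i_j,\ell_{i_j}}^2}{\|A_{i_j}\|_2^2} \;=\; \frac{1}{\|A\|_F^2}\sum_{\ell=1}^{k} q_\ell \|e_\ell\|_2^2.
\end{equation*}
Substituting both bounds into the error decomposition gives the one-step recursion $\mathbb{E}\|x_i-x^*\|_2^2 \leq \alpha\,\mathbb{E}\|x_{i-1}-x^*\|_2^2 + \frac{1}{\|A\|_F^2}\sum_\ell q_\ell\|e_\ell\|_2^2$, and unrolling produces the geometric-series form~\eqref{eqn:convg}. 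The second bound~\eqref{eqn:thm1-2} then follows by replacing each $\|e_\ell\|_2^2$ with the uniform bound $C$ and absorbing the remaining probabilities into a single constant.

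The main delicate point is (a) correctly handling the event that no mode is produced at a given iteration, since Algorithm~\ref{alg:single_row} only commits to an update when a group of sufficient size appears (Line~\ref{alg:line:7}); and (b) verifying that conditioning on this event does not skew the row-sampling distribution. Because the row $i_j$ is selected by the central worker before any worker output is seen, the sampling law remains $\|A_{i_j}\|_2^2/\|A\|_F^2$ independently of $\ell_{i_j}$, so the iterated expectation identities used above go through unchanged. Beyond that, the remaining work is the routine recursion and the clean algebraic cancellation of $\|A_{i_j}\|_2^2$, which is precisely the feature that makes norm-proportional sampling yield a tight bound in the single-row regime.
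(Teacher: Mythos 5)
Your proposal is correct and follows essentially the same route as the paper's own proof: the identical one-step error decomposition, the Strohmer--Vershynin contraction arising from norm-proportional row sampling, averaging the adversarial term with the conditional mode probabilities $q_\ell=\hat{q}^{\ell}_{\text{mode}}/q$, and unrolling the resulting recursion into the geometric series. The only minor point is that your final step for \eqref{eqn:thm1-2} naturally produces a constant proportional to $C^2\sum_{\ell\geq 1}q_\ell = C^2(1-q_0)$ rather than the $Cq_0$ written in the theorem, but this reflects looseness in the paper's statement (whose appendix proof does not derive that display) rather than a gap in your argument.
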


\begin{remark}
    To provide a quantitative understanding of  Theorem \ref{thm:main_d0=1}, we present several examples in  Tables~\ref{tab:combo_k} and \ref{tab:combo_n}. For simplicity, assume that each error category has the same fraction $p_\ell = p/k$. Thus, all $\hat{q}_{\text{mode}}^{\ell}$ are equal. Here $q_0$ is the probability that the algorithm chooses the right mode and $q$ is the probability that there is a mode.  In these two tables, we present the values for $\hat{q}_{mode}^{\ell},\hat{q}_{mode}^{0},q$ and $q_0$ by varying the number of error categories $k$, the number of chosen workers $n$ and the adversarial rate $p$. These two tables are generated by solving a linear system with a row-normalized matrix $A\in\mathbb{R}^{1000\times 100}$.

 As $k$ increases, $q_\ell$ decreases and $q_0$ increases. Therefore, the error bound in  \eqref{eqn:thm1-2} decreases with respect to $k$ and thus reaches better convergence. When $k$ is large enough, $ q_{\ell}\approx0$. Therefore, when the noise is uniformly random error and there is a mode for the step-size, the mode will be the correct mode.
 As $n$ increases, there is a similar decrease effect and therefore a better convergence.
 \begin{table}[h]
\caption{Total number of workers $N=100$, number of chosen workers $n=5$.}
    \label{tab:combo_k}
    \centering
    \begin{tabular}{||c|c|c|c|c|c||}
         \hline
         $p$ &$k$ & $\hat{q}_{mode}^{\ell}$ & $\hat{q}_{mode}^{0} $  & $q$  &$q_0$\\
         \hline
         \multirow{3}{4em}{\quad $0.8$}
         &$5$ &$0.1$ &$0.16$ &$0.67$  &$0.15$\\
         &$10$ & $0.04$ &$0.21$ &$0.57$  &$0.36$\\
        &$15$ &$0.02$ & $0.23$ & $0.48$  &$0.46$\\
         \hline
        \multirow{3}{4em}{\quad$0.2$} 
        &$3$ &$0.002$ &$0.63$ &$0.64$  &$0.98$\\
        &$5$ &$8\times10^{-4}$ &$0.65$ &$0.65$  &$0.99$\\
         &$10$ &$2\times10^{-4}$ &$0.66$ &$0.67$  &$0.99$\\
         &$15$ &$2\times10^{-4}$ &$0.685$ &$0.689$ &$0.99$\\
         \hline
    \end{tabular}
\end{table}
 
\begin{table}[h]
\caption{Total number of workers $N=100$, number of error categories $k=5$.}
    \label{tab:combo_n}
    \centering
    \begin{tabular}{||c|c|c|c|c|c||}
         \hline
         $p$ &$n$ & $\hat{q}_{mode}^{\ell} $ & $\hat{q}_{mode}^{0}$  & $q$ &$q_0$\\
         \hline
         \multirow{3}{4em}{\quad $0.8$}
         & 10 &0.099 &0.18 &0.67 &0.26\\
         &15 & 0.099 &0.2 &0.7  &0.29\\
         & 20 &0.097 &0.23 &0.71  &0.31\\
         \hline
        \multirow{3}{4em}{\quad$0.2$} 
        &10 &$7\times10^{-6}$ &0.904 &0.90 &$1 - 5\times10^{-6}$ \\
        &15 &$5\times10^{-7}$ &0.97 &0.97 &$1- 3\times10^{-6}$\\
        &20 &$1\times10^{-7}$ &0.99  &0.99 &$1- 6\times10^{-7}$\\
         \hline
    \end{tabular}
\end{table}
\end{remark}
\begin{proof}[Proof of Theorem \ref{thm:main_d0=1}]\label{appendix:pf_d0=1}
To prove   \eqref{eqn:convg}, at each iteration, we consider solving $Ax=b$, $Ax=b+e_1$,$\ldots$, $Ax=b+e_k$ with probability $q_0,q_1,\cdots,q_k$, respectively. Therefore, for the $(i+1)$-th step, we have the iteration 
$$x_{i+1}=x_{i}-\frac{\langle A_j,x_i\rangle-b_j}{\|A_j\|^2}A_j^{\top},$$
or
$$x_{i+1}=x_{i}-\frac{\langle A_j,x_i\rangle-(b_j+e_\ell(j))}{\|A_j\|^2}A_j^{\top}. $$ for $\ell=1,\cdots,k$, $A_j$ is the $j$-th row of matrix $A$.

Notice that when $x_{i+1}=x_{i}-\frac{\langle A_j,x_i\rangle-b_j}{\|A_j\|^2}(A_j)^{\top}$, we have
\begin{equation}\label{eqn:noise_free}
    \begin{aligned}
      &\mathbb{E}_{j}\|x_{i+1}-x^*\|_2^2\\
      =&   \mathbb{E}_{j}\|x_{i}-\frac{\langle A_j,x_i\rangle-b_j}{\|A_j\|^2}(A_j)^{\top}-x^*\|_2^2\\
      \leq &\left(1-\frac{\sigma_{\min}^2(A)}{\|A\|_F^2} \right)\|x_i-x^*\|_2^2.
    \end{aligned}
\end{equation}
When $x_{i+1}=x_{i}-\frac{\langle A_j,x_i\rangle-(b_j+e_\ell(j))}{\|A_j\|^2}(A_j)^{\top}$, we have
\begin{equation}\label{eqn:noise_ell}
    \begin{aligned}
      &\mathbb{E}_{j}\|x_{i+1}-x^*\|_2^2\\
      =&   \mathbb{E}_{j}\|x_{i}-\frac{\langle A_j,x_i\rangle-(b_j+e_\ell(j))}{\|A_j\|_2^2}A^{\top}(j,:)-x^*\|_2^2\\
      \leq &\left(1-\frac{\sigma_{\min}^2(A)}{\|A\|_F^2} \right)\|x_i-x^*\|_2^2 +\mathbb{E}_{j}\frac{e_{\ell}^2(j)}{\|A_j\|_2^2}\\
      =&\left(1-\frac{\sigma_{\min}^2(A)}{\|A\|_F^2} \right)\|x_i-x^*\|_2^2 + \frac{\|e_{\ell}\|^2}{\|A\|_F^2}.
    \end{aligned}
\end{equation}
Combining \eqref{eqn:noise_free} and \eqref{eqn:noise_ell}, we have
\begin{equation}
    \begin{aligned}
      & \mathbb{E}_{j}\|x_{i+1}-x^*\|_2^2\\
      \leq&\left(1-\frac{\sigma_{\min}^2(A)}{\|A\|_F^2} \right)\|x_i-x^*\|_2^2+\frac{1}{\|A\|_F^2}\sum_{\ell=1}^{k}q_{\ell}\|e_\ell\|^2.
    \end{aligned}
\end{equation}
Set $\alpha=1-\frac{\sigma_{\min}^2(A)}{\|A\|_F^2}$.
Therefore, 
\begin{equation}
    \begin{aligned}
      & \mathbb{E}\|x_{i+1}-x^*\|_2^2\\
      \leq&\alpha^{i+1}\|x_0-x^*\|_2^2+ \frac{1-\alpha^{i+1}}{1-\alpha}\frac{1}{\|A\|_F^2}\sum_{\ell=1}^{k}q_\ell\|e_\ell\|^2.
    \end{aligned}
\end{equation}
\end{proof}
\section{Discussion of the optimal number of used rows}
\label{appendix:opt_d0}
Assume that   row $r$ is distributed to $N_r$ workers and $n_r$ workers are selected to involve in the computation of row $r$ with   $N_r\equiv N, n_r \equiv n$ for all $r$. Additionally, we assume that the probability of each error category $\ell$ ($\ell \neq 0$) for each row is the same with $p_{r,\ell}\equiv p/k.$ Moreover, we have the restricted minimal mode number $g_0(r) \equiv g_0 = \max(\lceil\frac{n}{k+1}\rceil, \lceil n(1-p)\rceil)$, and 
\begin{equation}
    \begin{aligned}
      &b^r_g \equiv b_g, \text{coefficient of term } x^n \text{ of} \left( \sum_{j=0}^{g-1}\binom{Np/k}{j}x^j\right)^{k-1}\left( \sum_{j=0}^{g-1}\binom{N(1-p)}{j}x^j\right),\\
      &a^r_{g,\ell} \equiv a_g, \text{coefficient of term } x^{n-g} \text{ of} \left( \sum_{j=0}^{g-1}\binom{Np/k}{j}x^j\right)^{k-1},\\
      &q^r_g \equiv q_g = \frac{\binom{Np/k}{g}a_g}{\binom{N}{n}},\\
      & Q(r,\tau_i) \equiv \sum_{g = g_0}^{n}q_g\left({b_g}/{\binom{N}{n}}\right)^{d_0-1}=Q_{\max}=Q_{\min}\coloneqq Q,\\
      &\beta_r = \frac{d_0}{d_1}Q_{\max}.
    \end{aligned}
\end{equation}
Let $\alpha(d_0) = \alpha = 1 - Q\frac{d_0}{d_1}\sigma_{\min}^2(\tilde{A})$. To study the relation between $d_0$ and the convergence, consider
\begin{equation}
    \begin{aligned}
      \frac{\partial \alpha(d_0)}{\partial d_0} \propto - \sum_{g = g_0}^{n}q_g\left(1+d_0\log\left({b_g}/{\binom{N}{n}}\right)\right)\left({b_g}/{\binom{N}{n}}\right)^{d_0-1}.
    \end{aligned}
\end{equation}
If $d_0 \geq - \frac{1}{\log(b_g/\binom{N}{n})} $ for all $g$, then $\frac{\partial \alpha(d_0)}{\partial d_0} \geq 0$. This implies that as $d_0$ increases, $\alpha(d_0)$ increases.
When $g_0 = n$, we have
$$\frac{\partial \alpha(d_0)}{\partial d_0} \propto - \left(1+d_0\log\left({b_n}\big/{\binom{N}{n}}\right)\right)\left({b_n}\big/{\binom{N}{n}}\right)^{d_0-1},$$ 
and to reach the fastest convergence rate, $d_0 = - \frac{1}{\log\left( {b_g}/{\binom{N}{n}}\right)}$.
One can explore the minimizers for $\alpha$ in more general cases, where multiple local minimizers could present in the landscape.   

\section{Proof of Lemma~\ref{lem:mode g}}\label{pf:lem_mode_g}
\begin{proof}[Proof of Lemma~\ref{lem:mode g}]

Given a row $r$, the number of combinations where the mode belongs to category $\ell$ with mode count $g$ can be divided into two parts: the combinations of workers in category $\ell$ and the combinations of workers in all other categories excluding $\ell$.

We start by calculating the number of combinations of workers in the remaining categories. This is equivalent to selecting $(n_r - g)$ balls from $k$ bins subject to constraints that the $\tilde{\ell}$-th bin contains $N_rp_{r,\tilde{\ell}}$ balls, with a maximum of $g$ balls that can be chosen from this bin for all $\tilde{\ell} \neq \ell$. With these constraints, there are $\binom{N_rp_{r,\tilde{\ell}}}{j}$ ways to choose $j$ balls from bin $\tilde{\ell}$. By letting $j$ vary and considering the $k$ bins, the total number of valid combinations is equal to the coefficient of the term $x^{n_r-g}$ in the polynomial  $\prod\limits_{\tilde{\ell}=0,\tilde{\ell}\neq \ell}^{k}\sum_{j=0}^{g-1}\binom{N_rp_{r,\tilde{\ell}}}{j}x^j$.
The number of combinations of workers in category $\ell$ with mode count $g$ is $\binom{N_rp_{r,\ell}}{g}$. Finally, the total number of combinations is given by $\binom{N_r}{n_r}$. This concludes the proof of Lemma~\ref{lem:mode g}.
\end{proof}

\end{document}